\pgfplotsset{compat=newest}
\newtheorem{theorem}{Theorem}
\newtheorem*{theorem*}{Theorem}
\newtheorem{proposition}[theorem]{Proposition}
\newtheorem{lemma}[theorem]{Lemma}
\newtheorem{definition}[theorem]{Definition}
\newtheorem{setup}[theorem]{Setup}
\newtheorem{example}[theorem]{Example}
\newtheorem{remark}[theorem]{Remark}
\newcommand{\calO}{\mathcal{O}}
\newcommand{\OK}{\mathcal{O}_K}
\newcommand{\IC}{\mathbb{C}}
\newcommand{\KR}{K_\mathbb{R}}
\newcommand{\R}{\mathbb{R}}
\newcommand{\Q}{\mathbb{Q}}
\DeclareMathOperator{\SL}{SL}
\DeclareMathOperator{\ML}{Mod}
\DeclareMathOperator{\GL}{GL}
\DeclareMathOperator{\cl}{cl}
\DeclareMathOperator{\vol}{vol}
\DeclareMathOperator{\Err}{Err}
\DeclareMathOperator{\card}{\texttt{\#}}
\DeclareMathOperator{\N}{N}
\begin{document}
\title{Module lattices and their shortest vectors}
\author[N. Gargava]{Nihar Gargava}
\author[V. Serban]{Vlad Serban}
\author[M. Viazovska]{Maryna Viazovska}
\author[I. Viglino]{Ilaria Viglino}

\address{N. Gargava, Section of Mathematics, EPFL, Switzerland}
\email{nihar.gargava@epfl.ch}
\address{V. Serban, Department of Mathematics, New College of Florida, U.S.A.}
\email{vserban@ncf.edu}
\address{M. Viazovska, Section of Mathematics, EPFL, Switzerland}
\email{maryna.viazovska@epfl.ch}
\address{I. Viglino, Section of Mathematics, EPFL, Switzerland}
\email{ilaria.viglino@epfl.ch}

\begin{abstract}
We study the shortest vector lengths in module lattices over arbitrary number fields, with an emphasis on cyclotomic fields. In particular, we sharpen the techniques of \cite{GSV23} to establish improved results for the variance of the number of lattice vectors of bounded Euclidean norm in a random module lattice. We then derive tight probabilistic bounds for the shortest vector lengths for several notions of random module lattice. 
\end{abstract}
\maketitle

\section{Introduction}

One of the central concerns in lattice-based cryptography is to prove or disprove the existence
of fast algorithms that can find or approximate the length of the shortest vectors of a lattice.
The non-existence of such algorithms is still a wide-open question, and there exist well-publicized open challenges \cite{BLR08}
inviting competitors to submit algorithms solving the shortest vector problem (SVP) for "randomly" produced lattices. For random lattices $\Lambda$ in increasing dimension $n$, it turns out that the length of the shortest vector, denoted by $\lambda_1(\Lambda)$, can be predicted almost exactly:
\begin{theorem}
\cite[Theorem 5]{LN20}
\label{th:phong}
Let $\Lambda\subseteq \mathbb{R}^{n}$ be a Haar-random lattice chosen from the space $\SL_{n}(\mathbb{R})/\SL_{n}(\mathbb{Z})$. Then, as $n \rightarrow \infty$, with probability $1- o(1)$ we have 
\begin{equation}
  1-\frac{\log \log n}{n} \leq 2^{-\frac{1}{n}}\cdot\frac{\lambda_1(\Lambda)}{\gamma(n)} \leq 1 + \frac{\log\log n}{n},
\end{equation}
where $\gamma(n)$ is radius of a ball of unit volume in $\R^n$. 
\end{theorem}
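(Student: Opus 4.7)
The plan is to analyse the point-counting function
$$N_r(\Lambda) := \#\{v \in \Lambda \setminus \{0\} : \|v\| \le r\}$$
by combining the Siegel mean value theorem (first moment) with Rogers' integral formula (second moment) on the space $\SL_n(\R)/\SL_n(\mathbb{Z})$, and then invoking Markov's and Chebyshev's inequalities. Siegel gives $\mathbb{E}[N_r(\Lambda)] = \vol(B_r) = (r/\gamma(n))^n$. Because $v$ and $-v$ are both counted, the natural critical radius (at which the expected number of \emph{pairs} $\{v,-v\}$ equals one) is $r_* := \gamma(n)\cdot 2^{1/n}$. Writing $\varepsilon_n = (\log\log n)/n$, the goal is to show that $\lambda_1(\Lambda)\in [r_*(1-\varepsilon_n),\, r_*(1+\varepsilon_n)]$ with probability $1-o(1)$.

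\textbf{Lower bound via Markov.} Put $r_- = r_*(1-\varepsilon_n)$. The event $\{\lambda_1(\Lambda)\le r_-\}$ forces $N_{r_-}(\Lambda)\ge 2$, whence
$$\mathbb{P}(\lambda_1(\Lambda)\le r_-) \;\le\; \tfrac12\,\mathbb{E}[N_{r_-}] \;=\; (1-\varepsilon_n)^n \;\le\; e^{-n\varepsilon_n} \;=\; 1/\log n \;=\; o(1).$$

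\textbf{Upper bound via Chebyshev.} Put $r_+ = r_*(1+\varepsilon_n)$, so that $\mathbb{E}[N_{r_+}] = 2(1+\varepsilon_n)^n \sim 2\log n$. Then
$$\mathbb{P}(\lambda_1(\Lambda) > r_+) \;=\; \mathbb{P}(N_{r_+} = 0) \;\le\; \frac{\mathrm{Var}(N_{r_+})}{\mathbb{E}[N_{r_+}]^2}.$$
Rogers' second moment formula applied to $\mathbf{1}_{B_{r_+}}$ gives
$$\mathbb{E}[N_{r_+}^2] \;=\; \mathbb{E}[N_{r_+}]^2 + 2\,\mathbb{E}[N_{r_+}] + E_n,$$
where the middle term collects the diagonal pairs $v=\pm w$ and $E_n$ arises from pairs of collinear lattice vectors with non-unit integer ratio. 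Each such pair contributes $\vol(B_{r_+/k}) = O((\log n)/k^n)$ for some integer $k\ge 2$, and the number of primitive pairs $(p,q)$ with $\max(|p|,|q|) = k$ is $O(k)$, so $E_n = O\!\left((\log n)\sum_{k\ge 2} k^{1-n}\right) = O((\log n)\cdot 2^{-n}) = o(1)$. Hence $\mathrm{Var}(N_{r_+})\sim 2\,\mathbb{E}[N_{r_+}]$ and Chebyshev yields $\mathbb{P}(N_{r_+}=0) = O(1/\log n) = o(1)$.

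\textbf{Main obstacle.} The delicate step is the second-moment estimate. One needs Rogers' formula in a form sufficiently uniform to pin down the leading constant $2$ on the diagonal contribution and to show that the off-diagonal series is negligible compared with $\mathbb{E}[N_{r_+}]\sim 2\log n$. The window $\varepsilon_n=(\log\log n)/n$ is calibrated precisely so that $\mathbb{E}[N_{r_+}]$ grows like $\log n$: any narrower window would break either Markov on the lower side or Chebyshev on the upper side. Balancing this window against the uniform Rogers' correction bound is where the argument concentrates its difficulty.
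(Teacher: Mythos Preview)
The paper does not prove Theorem~\ref{th:phong}; it is quoted verbatim from \cite{LN20} as background. So there is no ``paper's own proof'' to compare against. That said, your argument is correct and is precisely the mechanism the paper later deploys for the module-lattice analogue: Proposition~\ref{prop:probabounds} packages exactly your Markov/Chebyshev sandwich (in the form $1-x\le \mathbf{1}(x=0)\le m^{-2}(x-m)^2$), and Theorem~\ref{thm:Rogersmoments} (Rogers) supplies the second-moment input $\mathbb{E}[\rho_V^2]=V^2+2V+O(V\cdot 2^{-n})$ over $\Q$, which is your $E_n$ estimate. Your count of collinear contributions $\sum_{k\ge 2}O(k)\cdot k^{-n}$ is the $K=\Q$ specialization of the sum $\sum_{\alpha\in\Q^\times\setminus\{\pm1\}}D(\alpha)^{-n}\vol(B\cap\alpha^{-1}B)/\vol(B)$ appearing in \eqref{eq:introoutline}, and your calibration $\varepsilon_n=(\log\log n)/n$ matches the paper's choice $\varepsilon(n)=1/\log\omega_K$ in the proof of Theorem~\ref{th:as_module_bound} when $\omega_K=2$ is replaced by the ambient dimension. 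In short: your proof is sound and is the same strategy the paper itself uses for its new results.
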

\begin{remark}
\label{re:gamma}
For large $n$, $\gamma(n) \simeq \sqrt{\tfrac{n}{2 \pi e}}$ by Stirling's approximation.
\end{remark}

A weaker form of this is stated by Ajtai in \cite{A02} without proof.
In the lattice challenges in \cite{BLR08}, this result is used to set benchmarks for challenge contenders. Such results greatly inform our understanding and analysis of lattice reduction algorithms \cite{LN20,EK20}. \par

Ever since the arrival of the NTRU cryptosystem \cite{NTRUpaper}, lattices with additional algebraic structure called module lattices, which do not exhibit the same behavior as the Haar random lattices of Theorem \ref{th:phong}, have been the object of study and scrutiny as they promise efficiency gains compared to unstructured lattices. Module lattices now underlie the security of several NIST post-quantum standards \cite{NIST}. It is therefore crucial to understand and quantify the behavior of short vectors as well as the hardness of computational problems such as SVP on such lattices.
The lattices in question are $\OK$-modules, where $\OK$ is the ring of integers of a number field $K$, and the latter is often taken to be a cyclotomic field. These are one of the main special classes of lattices studied in the context of post-quantum cryptography \cite{LS15,DMPT23}. See also work on ring-based versions of the learning with errors (LWE) and short integer solutions (SIS) problems such as \cite{RingLWE2010,RingLWE}.

Producing asymptotic estimates like Theorem \ref{th:phong} for module lattices in growing dimensions is complicated by the additional arithmetic complexity of the objects. In addition, in this setting, one ideally also wants to understand the behavior when increasing the degree of the number field $K$, leading to further increasing complexity. In this article, we obtain results analogous to the above theorem for many families of module lattices. For example, we obtain:
\begin{theorem}
	\label{th:as_module_boundintro}
Let $t \geq 11$ be fixed.
Let $K = \mathbb{Q}(\zeta_k)$ be a cyclotomic number field and let $n = t \cdot \deg K = t \varphi(k)$. Let $\omega_K=\card\mu(K)$ which is $k$ or $2k$ depending on whether $k$ is even or odd. Consider a Haar-random 
unit covolume module lattice $\Lambda \subset K^{t} \otimes \mathbb{R}\cong \R^n$ in the moduli space of rank-$t$ module lattices over $K$.
Then, as $k \rightarrow \infty$, we have with probability $1- o(1)$ that
\begin{equation}
 1- \frac{\log \log \omega_K}{n} \leq \omega_K^{-\frac{1}{n}}\cdot\frac{\lambda_1(\Lambda)}{\gamma(n)} \leq  1 + \frac{\log \log \omega_K}{n}.
 \label{eq:bound}
\end{equation}
\end{theorem}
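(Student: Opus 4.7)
The plan is to establish the bounds through a second moment argument on the counting function $N_r(\Lambda) = \#\bigl(\Lambda \cap B_r\bigr) - 1$, where $B_r \subset \R^n$ is the Euclidean ball of radius $r$ and $n = t\varphi(k)$. The key structural observation is that for any module lattice $\Lambda$, the group of roots of unity $\mu(K)$ acts freely on $\Lambda \setminus \{0\}$ by scalar multiplication and preserves Euclidean norm; hence $N_r(\Lambda)$ is always a nonnegative integer multiple of $\omega_K$. In particular, the event $\{\lambda_1(\Lambda) \leq r\}$ coincides with $\{N_r(\Lambda) \geq \omega_K\}$. The strategy is then to apply a Siegel-type mean formula on the moduli space together with the sharpened variance estimate promised in the abstract.

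For the mean, I would invoke the analog of Siegel's mean value theorem for the moduli space of rank-$t$ module lattices over $\OK$ with unit covolume, yielding $\mathbb{E}[N_r(\Lambda)] = \vol(B_r) = (r/\gamma(n))^n$. Choosing the two threshold radii
\begin{equation*}
r_\pm \;=\; \omega_K^{1/n}\,\gamma(n)\,\Bigl(1 \pm \tfrac{\log\log\omega_K}{n}\Bigr),
\end{equation*}
standard estimates give $\mathbb{E}[N_{r_+}] \gtrsim \omega_K \log\omega_K$ and $\mathbb{E}[N_{r_-}] \lesssim \omega_K / \log\omega_K$, where both use $(1 \pm \log\log\omega_K/n)^n = (\log\omega_K)^{\pm 1 + o(1)}$.

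The lower bound on $\lambda_1$ is then immediate from Markov's inequality combined with the $\omega_K$-divisibility:
\begin{equation*}
\mathbb{P}\bigl(\lambda_1(\Lambda) \leq r_-\bigr) \;=\; \mathbb{P}\bigl(N_{r_-} \geq \omega_K\bigr) \;\leq\; \frac{\mathbb{E}[N_{r_-}]}{\omega_K} \;\lesssim\; \frac{1}{\log\omega_K} \;=\; o(1).
\end{equation*}
For the upper bound, I would apply Chebyshev's inequality to conclude
\begin{equation*}
\mathbb{P}\bigl(\lambda_1(\Lambda) > r_+\bigr) \;=\; \mathbb{P}\bigl(N_{r_+} = 0\bigr) \;\leq\; \frac{\mathrm{Var}(N_{r_+})}{\mathbb{E}[N_{r_+}]^2}.
\end{equation*}
Here I would feed in the improved second moment bound for module lattices (the main technical contribution of the paper, sharpening \cite{GSV23}), which should be of the form $\mathrm{Var}(N_r) \leq C_t \cdot \omega_K \cdot \mathbb{E}[N_r] + (\text{lower order})$ so that the ratio becomes $O(C_t / \log\omega_K) = o(1)$.

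The main obstacle is the variance estimate, not the concentration step itself. The Rogers-type expansion of $\mathbb{E}[N_r^2]$ on the moduli space of module lattices contains orbit terms indexed by $\OK$-submodules, and the dominant correction beyond $\mathbb{E}[N_r]^2$ is the $\mu(K)$-orbit contribution of size $\omega_K\cdot\mathbb{E}[N_r]$, which is exactly absorbed by the divisibility gain. Controlling the remaining terms uniformly in $k$ requires estimates on the number of $\OK$-submodules of low index and on their covolumes, and this is presumably where the hypothesis $t \geq 11$ enters: the higher-rank submodule contributions decay with $t$, and $t \geq 11$ should be the threshold at which the resulting series converges with the margin needed to beat the $(\log\omega_K)^2$ gap provided by the mean.
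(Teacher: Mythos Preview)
Your proposal is correct and essentially identical to the paper's argument: the paper derives this (restated as Theorem~\ref{th:as_module_bound}) from the cyclotomic second-moment bound (Theorem~\ref{thm:cyclotomicnew}) via Proposition~\ref{prop:probabounds}, using precisely the first-moment/Markov bound for the lower inequality and a Chebyshev-type second-moment bound for the upper, with the choice $\varepsilon(n)=1/\log\omega_K$. One small correction on the origin of the $t\ge 11$ threshold: the Rogers-type error terms here are indexed by $\alpha\in K^\times\setminus\mu(K)$ and are controlled through Weil-height lower bounds in cyclotomic fields (Schinzel, Smyth--Flammang, Amoroso--Dvornicich) governing the unit contributions, rather than through counts of $\OK$-submodules of low index.
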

The $t \geq 11$ condition is probably not tight and appears for technical reasons. Improvements to our methods might further reduce this lower bound to values like 3,4,5 (we need $t > 2$ for two moments to exist). Note that in the statement above the factor $\omega_K^\frac{1}{d}  \simeq 1 + \tfrac{\log k}{t\varphi(k)}$ derives from the additional symmetry of module lattices and that the lower bound is valid more generally for $t\geq 2$ (see Remark \ref{rem:unconditionallower}). \par 

What is more, combining our methods with some computation, one can go beyond asymptotic results and we show how to obtain honest probabilistic bounds on the shortest vector for many number fields and $\OK$-ranks that underlie the security of currently implemented schemes. See Example \ref{ex:fixedK} for a case study that required minimal computation and figure \ref{fig:momenterrors} for a host of additional error bounds. We emphasize that these methods produce actual theorems and not just heuristic results. \par

The sequence of number fields chosen in Theorem \ref{th:as_module_boundintro} can also be changed as long as one has uniform bounds on the absolute Weil height of algebraic numbers. This is always true for abelian extensions such as cyclotomic fields, as well as a plethora of other examples and is referred to in the mathematical community as the \emph{Bogomolov property} for the union of all the fields considered, see e.g. \cite[Chapter 11]{MS21} for some results along those lines. \par
The random module lattices in question are studied via a Rogers integration formula \cite{K19,hughes2023mean,GSV23}, leading to moment estimates for the number of non-trivial lattice points of bounded Euclidean length as a random variable over the probability space of Haar-random lattices. Such results are interesting in their own right, and the first three authors in \cite{GSV23} derived asymptotic formulas for the moments under the height assumptions mentioned above. \par
This article, then, has two major objectives: first, we sharpen the second moment results of \cite{GSV23} in Section \ref{sec:moments} with a particular focus on cyclotomic fields, aiming for a slightly more accessible exposition. For example, the minimal rank condition is improved from $t\geq 27$ in \cite{GSV23} to $t\geq 11$ as appearing in Theorems \ref{th:as_module_boundintro} and \ref{thm:cyclotomicnew}. This is mainly achieved by bounding more carefully the contribution to error terms of algebraic numbers $\alpha\in K^\times$ of low Weil height, using specific knowledge at our disposal for cyclotomic fields (see e.g., Theorem \ref{thm:heightexceptions} and Proposition \ref{prop:cycloheightbounds}). \par

Second, we show how to derive shortest vector bounds from the moment results in Section \ref{sec:SVbounds} and carry out a prototypical computation for a particular number field $K=\Q(\zeta_{16})$ and blocksize $256$ in Section \ref{sec:example} to showcase how our results apply to such a concrete situation. See also figure \ref{fig:momenterrors} for bounds for other cyclotomic fields and ranks. 
\par
Finally, we highlight a connection to coding theory: in a companion paper \cite{KatznelsonOK}, it is shown that moments for $q$-ary type lattices obtained by lifting algebraic codes from finite fields to $\OK^t$ (see Construction A in \cite{CS}) converge for large enough parameters to the moments for the full space of module lattices. This crucially implies that results such as Theorem \ref{th:as_module_boundintro} also hold for uniformly random lattices in large enough discretized sets of this form. 

\section*{Acknowledgements}

We would like to thank Andreas Str\"ombergsson for comments on our preprint \cite{GSV23} that informed some aspects of this article. The writing has also benefited from discussions with L\'eo Ducas, Lynn Engleberts, Thomas Espitau, Seungki Kim, Phong Nguyen and Paola de Perthuis. 
This research was partly funded by the Swiss National Science Foundation (SNSF), Project funding (Div. I-III), "Optimal configurations in multidimensional spaces",
184927.

\section{Random module lattices}
We first fix some notations used throughout the text. We shall denote by $K$ a number field (a finite field extension of the rational numbers), by $\deg(K)=[K:\Q]$ or simply $d$ its field degree over the rationals and by $\OK$ its ring of integers. The main example to keep in mind throughout the text is that of a cyclotomic field $K=\mathbb{Q}(\zeta_m)$ obtained by adjoining a primitive $m$-th root of unity $\zeta_m=e^{2\pi i/m}$ to $\Q$. In this case the degree $\deg(K)=\varphi(m)$ is given by Euler's totient function and $\OK=\mathbb{Z}[\zeta_m]$. The invertible elements under multiplication in $\OK$ will be denoted by $\OK^\times$. 
\subsection{Embeddings and unit structure}
Any number field $K$ has $\deg(K)$ distinct field embeddings $\sigma:K\to \mathbb{C}$ which can be grouped into $r_1$ embeddings whose image is contained in $\R$ and $r_2$ pairs of complex conjugate embeddings. The pair $(r_1,r_2)$ is referred to as the signature of $K$ and one has $r_1+2r_2=\deg(K)$. These embeddings naturally map our number field 
$$i_K:K\hookrightarrow K\otimes_{\Q}\R$$
into a $\deg(K)$ dimensional Euclidean space $K\otimes_{\Q}\R\cong \R^{r_1}\otimes \mathbb{C}^{r_2}$ which we will abbreviate by $K_{\R}$. The image $i_K(\OK)$ of $\OK$ via this map is a lattice in $K_{\R}$, and similarly any, say, integral ideal $I\subset \OK$ gives rise to a lattice. For a given positive integer $t$, we will also consider $t$ copies of this construction. In this way, for instance any tuple $(I_1,\ldots,I_t)$ of $\OK$-ideals gives rise to a lattice in $t\cdot \deg(K)$-dimensional Euclidean space $K_{\R}^t$. We shall study the behaviour of short vectors in such lattices. \par
To that end, the structure of the unit group $\OK^\times$ will play a crucial role. Recall that Dirichlet's unit theorem states that this is a free abelian group of rank $r(K)=r_1+r_2-1$, so that, denoting by $\mu(K)$ the roots of unity in $K$, we have the identification as groups:
$$\OK^\times\cong \mu(K)\times \mathbb{Z}^{r_1+r_2-1}.$$
We shall denote the size of the torsion in $\OK^\times$ by $\omega_K=\card \mu(K)$. Note that for $\mathbb{Q}(\zeta_m)$ and $m\neq 2$, all the embeddings are complex, so that $\mathbb{Z}(\zeta_m)^\times$ has rank $\varphi(m)/2-1$ and the number of roots of unity is $\omega_K=2m$ if $m$ is odd and $m$ otherwise.

\subsection{The space of module lattices and components}
Our goal is to study random $\OK$-modules of rank $t>0$ inside Euclidean space $K_{\R}^t$. Given that the questions we consider are invariant under scaling of the lattices, it is preferable and customary to fix the determinant of the lattice and work with spaces of lattices of unit covolume. We take this point of view for the rest of the paper. 
Similarly to the setup of \cite{DK22} we then define: 
\begin{definition}
A module lattice in $\KR^{t}$ is a pair 
$(g,M)$ where $g \in \GL_{t}(K \otimes \mathbb{R})$ and $M \subseteq K^{t}$ is a finitely generated $\OK$-module of maximal rank so that 
\begin{equation}
	\vol\left(K^{t} \otimes \mathbb{R} / (g^{-1} \cdot M ) \right) = 1.
\end{equation}
Two module lattices are considered equal if the lattice $g^{-1}M \subseteq K^{t} \otimes \mathbb{R}$ is identical.

A module lattice is called rational, or simply an $\OK$-module, if it is equal to $(1,M)$ for some $M \subseteq K^{n}$.
When not ambiguous, we may refer to $M$ as the module lattice $(1, M)$. The set of all module lattices of rank $t$ over $K$ shall be denoted by $\ML_{t}(K)$.
\end{definition}
Module lattices inherit a natural action of $\GL_{t}(K_\mathbb{R})$ via multiplication on the left: 
\begin{equation}
 h :(g,M)  \mapsto (  |\N(\det h)|^{-\frac{1}{t \deg K}}h g , M).
\end{equation}
This descends to an action of $\GL_t(\KR)/\mathbb{R}_{>0}$. Furthermore, each space of module lattices comes equipped with a map to the class group $\cl(K)$ of $K$ which picks up the so-called Steinitz class of $(g,M)\in\ML_{t}(K)$: the ideal class of the fractional ideal generated by all the determinants of $t$-tuples of vectors in $M$. It is not hard to see that the space $\ML_{t}(K)$ consists then of one connected component for each Steinitz class on $\cl(K)$ and that each connected component looks like $\GL_t(\KR)/\Gamma \cdot \mathbb{R}_{>0}$ for some arithmetic group $\Gamma$. \par
This produces a natural probability measure on $\ML_{t}(K)$, since each of the components carry a Haar measure as a homogeneous space of $\GL_t(\KR)/\mathbb{R}_{>0}$ which is unique if we prescribe the total mass. Note also that for $t\geq 2$, $\ML_{t}(K)$ is a non-compact probability space, whereas for $t=1$, this is the space of ideal lattices which is compact due to the finiteness of the class group. 
\begin{remark}
\label{re:adelic}

One could take an adelic point of view as in \cite{DK22} and see $\ML_t(K)$ as an adelic quotient. 
In this way, $\ML_t(K)$ really is seen to carry a Haar measure as a homogeneous space of a locally compact group.
\end{remark}
\subsection{Lattices from codes and reduction to principal component}\label{subsec:codes}
Our main object of study in this paper will be, for a fixed origin-centered ball $B$ of volume $V$ in $\KR^t$, the random variable 
\begin{equation}\label{eq:defrhoLambda}
    \rho_V(\Lambda):=\card \left(B\cap \left(\Lambda\setminus \{0\}\right)\right)
\end{equation}
where $\Lambda$ is a random unit covolume module lattice over $K$ in $\deg(K)\cdot t$-dimensional Euclidean space. There are several candidate spaces of such random lattices which one might potentially be interested in: 
\begin{enumerate}
    \item The full space $\ML_t(K)$ as defined above. 
    \item The principal component, whose study reduces to the space $$\SL_t(\KR)/\SL_t(\OK),$$
    which carries a canonical Haar probability measure. 
    \item The discrete sets of lattices which, for projections $\pi_\mathcal{P}:\OK\to\OK/\mathcal{P}=k_\mathcal{P}$, are obtained by lifting algebraic codes over the residue field $k_\mathcal{P}$ as follows: 
    \begin{equation}
\mathcal{L}(\mathcal{P},s) = \{ \beta \pi_\mathcal{P}^{-1}(S) \mid S\subseteq k_\mathcal{P}^{t} \text{ is an $s$-dimensional $k_\mathcal{P}$-subspace}\},
\label{eq:def_of_L}
\end{equation}
where $\mathcal{P}\subset \OK$ is an unramified prime ideal, $1\leq s\leq t-1$ and where the scaling factor 
$$\beta = \beta(\mathcal{P},s) = \N(\mathcal{P})^{-\left(1-\frac{s}{t}\right)\frac{1}{[K:\mathbb{Q}]}}$$
ensures the lattice $ \beta \pi_\mathcal{P}^{-1}(S)$ has the same covolume as $\OK^t$ (and we may normalize so that all of these are one). In particular, as the norm of $\mathcal{P}$ and thus the residue field cardinality $\card k_\mathcal{P}$ increase, the sets $\mathcal{L}(\mathcal{P},s)$ are increasingly large discrete subsets of $\ML_t(K)$.
\end{enumerate}
The first two agree (up to a compact torus) when $K$ has class number one, but in general the second is slightly simpler to work with. The lattices $\mathcal{L}(\mathcal{P},s)$ lie on the same connected component of Steinitz class as the ideal class of $\mathcal{P}^{t-s}$, whereas the ideal classes of primes $\mathcal{P}$ are known to equidistribute in the class group as the norm $\N(\mathcal{P})\to\infty$. We record the following useful remark: 
\begin{remark}
    It suffices to compute the moments of the random variable $\rho_V(\Lambda)$ for random lattices $\Lambda \in\SL_t(\KR)/\SL_t(\OK)$. 
\end{remark}
Indeed, the behavior is the same for each component of $\ML_t(K)$ (see e.g. \cite[Remark 7]{GSV23}). Moreover, we show in \cite{KatznelsonOK} that, under some mild conditions on $s$, the moments over the discrete spaces $\mathcal{L}(\mathcal{P},s)$ converge to the moments for $SL_t(\KR)/\SL_t(\OK)$ as $\N(\mathcal{P})\to\infty$. \par
Henceforth, we therefore focus on evaluating moments for $\rho_V(\Lambda)$ in the principal case, but highlight the fact that this implies such results therefore hold also for the full space of module lattices as well as for lattices in $\mathcal{L}(\mathcal{P},s)$ for large norm of $\mathcal{P}$.

\section{Second moment results}\label{sec:moments}
In this section, we show how to obtain via the the approach devised in \cite{GSV23} improved second moment results for the number of short vectors of bounded Euclidean norm. Our improvements are primarily focused on lattices over cyclotomic fields, although the methods apply more widely. 
\subsection{Additional symmetry and moments}
Fixing an origin-centered ball $B$ of volume $V$, we recall our random variable on the space of lattices of covolume one $\rho_V(\Lambda)$ defined in \eqref{eq:defrhoLambda} and counting non-trivial lattice vectors in $B$. C.L. Siegel's classical mean value theorem \cite{Sie45} implies in particular that the expected value satisfies
$$\mathbb{E}[\rho_V(\Lambda)]=V$$
for unstructured lattices, and it is not hard to show that the same holds for module lattices (see e.g., \cite[Lemma 1]{AV}). C.A. Rogers then developed an integral formula expressing the higher moments of $\rho_V(\Lambda)$ in terms of a rather involved sum over various integral contributions from linear subspaces \cite{Rogers55}. He then proceeded to estimate all of the contributions in the integral formula and showed: 
\begin{theorem}[\cite{R1956}]\label{thm:Rogersmoments}
   The $k$-th moment of $\rho_V(\Lambda)$ for random $\Lambda \in \SL_t(\R)/\SL_t(\mathbb{Z})$ satisfies, provided $t\geq \lceil k^2/4 +3\rceil$, the bound 
   $$0\leq \mathbb{E}[\rho_V(\Lambda)^k]-2^k \cdot m_{k}( \tfrac{V}{2} )\leq V\cdot E_{k,t},$$
   where \begin{equation}
	m_{k}(\lambda) = e^{-\lambda}\sum_{r=0}^\infty\frac{\lambda^{r}}{r!} r^{k}  = \mathbb{E}_{X \sim \mathcal{P}(\lambda) }(X^{k})
\label{eq:def_of_poisson}
\end{equation}
is the $k$-th moment of a Poisson distribution and the error term $E_{k,t}$ decays exponentially in $t$. 
\end{theorem}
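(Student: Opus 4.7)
The plan is to invoke C.~A. Rogers' mean-value integration formula, which expresses $\mathbb{E}[\rho_V^k]$ for random $\Lambda\in\SL_t(\R)/\SL_t(\mathbb{Z})$ as an explicit finite sum of Euclidean integrals. After expanding
\[
\rho_V(\Lambda)^k = \sum_{v_1,\ldots,v_k\in\Lambda\setminus\{0\}} \ind_B(v_1)\cdots\ind_B(v_k),
\]
one groups $k$-tuples $(v_1,\ldots,v_k)$ by the ``type'' of their $\mathbb{Z}$-linear relations: each tuple has a primitive $\mathbb{Z}$-span of some rank $m\leq k$, and relative to a fixed basis $(u_1,\ldots,u_m)$ of this span, the vectors $v_i$ are encoded by an integer matrix $A\in M_{k\times m}(\mathbb{Z})$ up to the natural left equivalence. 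Rogers' formula then yields an expression $\mathbb{E}[\rho_V^k] = \sum_\nu c_\nu\, I_\nu(V)$, where $\nu$ indexes types, $c_\nu$ is an arithmetic weight governed by the elementary divisors of $A$, and $I_\nu(V)$ is the Euclidean integral of $\prod_i \ind_B\bigl(\sum_j A_{ij}u_j\bigr)$ over $(\R^n)^m$.

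The next step is to isolate the principal term. It arises from the ``diagonal'' types in which each $v_i$ equals $\pm u_{\pi(i)}$ for some map $\pi\colon\{1,\ldots,k\}\to\{1,\ldots,m\}$ and some sign choice. For these types $A$ has a single non-zero entry equal to $\pm 1$ per row, so all elementary divisors equal $1$ and $c_\nu=1$, while $I_\nu(V)$ factors as $(V/2)^m$, the halving correcting for the $u\leftrightarrow -u$ identification needed to avoid double counting. The enumeration of surjective sign-labeled maps contributes a combinatorial factor $2^k m!\,S(k,m)$, where $S(k,m)$ is the Stirling number of the second kind. Summing over $m$ and invoking the classical identity $m_k(\lambda)=\sum_m S(k,m)\lambda^m$ (equivalent to the Dobi\'nski presentation in \eqref{eq:def_of_poisson}) then reproduces exactly $2^k m_k(V/2)$.

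All remaining types, where $A$ has a non-unit elementary divisor or a column with several non-zero entries, contribute the error $V\cdot E_{k,t}$. For these the coefficient $c_\nu$ carries a factor $\prod_i d_i^{-t}$ from the Smith normal form reduction, which is geometric in $t$. The main obstacle is to assemble these pieces into a single sum bounded by $V\cdot E_{k,t}$ with $E_{k,t}$ exponentially small in $t$: this requires simultaneously controlling the number of equivalence classes of matrices $A$ of a given elementary divisor profile, the size of the Euclidean integral $I_\nu(V)$ (handled by triangulating $A$ and applying Hadamard-type bounds on the transformed ball), and the arithmetic weight. The dimensional hypothesis $t\geq\lceil k^2/4 + 3\rceil$ enters precisely here: it ensures that the combined sums converge absolutely and decay exponentially in $t$, whereas for smaller $t$ the proliferation of non-principal types outstrips the Smith-normal-form decay. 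Finally, the non-negativity $\mathbb{E}[\rho_V^k]\geq 2^k m_k(V/2)$ is immediate, since every coefficient and every integral in Rogers' expansion is non-negative and the principal term corresponds to retaining only a subset of types.
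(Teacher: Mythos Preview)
The paper does not supply its own proof of this statement: Theorem~\ref{thm:Rogersmoments} is quoted from Rogers' 1956 paper and serves only as background and motivation for the module-lattice analogue developed later. There is therefore no ``paper's proof'' to compare against.

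That said, your outline is a faithful sketch of Rogers' original argument. The decomposition of $k$-tuples by the rank and Smith normal form of the matrix $A$ encoding their $\mathbb{Z}$-relations, the identification of the principal types as those with $A$ having a single $\pm 1$ per row, the Stirling-number bookkeeping yielding $2^k m_k(V/2)$, and the source of the exponential decay in $t$ via the factors $\prod_i d_i^{-t}$ are all correct. The one place to be slightly careful is your non-negativity claim: in Rogers' formula the arithmetic weights are indeed non-negative (they are sums of products of $(e_i/q)^t$ over divisor configurations), so the inequality $\mathbb{E}[\rho_V^k]\geq 2^k m_k(V/2)$ does follow by discarding the non-principal types, but this deserves a sentence of justification rather than being asserted as ``immediate''. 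As written your proposal is an accurate high-level summary rather than a proof; the substantive analytic work---bounding the number of matrix types and the associated integrals so that the error sums converge and decay in $t$---is precisely what Rogers carries out and what you have only named.
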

In other words, noting that short vectors come in pairs due to the imposed symmetry under $\mathbb{Z}^\times=\{\pm 1\}$, Rogers showed that the number of pairs of short non-zero vectors in $B$ approaches a Poisson distribution of mean $\tfrac{V}{2}$. \par 
The crucial difference for lattices $\Lambda\in \ML_t(K)$ is that units $\OK^\times$ act diagonally (with respect to the $t$ copies of $\KR$) on lattice vectors. In particular, roots of unity in $\OK^\times$ lie on the unit circle in each complex embedding and thus their action preserves Euclidean distances, leading to the important observation: 
\begin{remark}
    For $\Lambda\in \ML_t(K)$, the random value $\rho_V(\Lambda)$ is a multiple of the number of roots of unity $\omega_K$. 
\end{remark}
Therefore, the natural object to study is $\omega_K$-tuples of vectors in $B\cap \Lambda$. In light of both the mean value results and Theorem \ref{thm:Rogersmoments}, one might therefore expect the number of $\omega_K$-tuples to converge to a Poisson distribution of mean $\tfrac{V}{\omega_K}$ as the dimension increases. This was indeed shown in \cite{GSV23} in quite some generality. In this case, the analogues of Rogers' integral formula \cite{K19,hughes2023mean,GSV23} are still available, but the technical obstacles is in estimating the contributions of the various subspaces. Roughly, the additional unit action's impact can be seen in $\omega_K$ replacing the role of $2$ in the main moment terms, but the presence of typically infinitely many non-torsion units in $\OK^\times$ makes the error terms considerably more delicate to estimate. 
\par 
For the rest of this section, we give an exposition of a sharpening of the techniques in \cite{GSV23} in the second moment case. Our goal is therefore to show that, for $\Lambda\in\SL_t(\KR)/\SL_t(\OK)$ and $\OK$-rank $t$ large enough, we have the bounds 
\begin{equation}\label{eq:toshowmoment}
0\leq \mathbb{E}[\rho_V(\Lambda)^2]-(V^2+\omega_K\cdot V)\leq V\cdot  E_{K,t},
\end{equation}
for some error term $E_{K,t}$ that decays rapidly as the lattice dimension increases. 
\subsection{Integral formula and heights}
The Rogers integral formula for module lattices implies for a fixed origin-centered ball $B$ in $t$ copies of Euclidean space $K\otimes_\mathbb{Q}\mathbb{R}$ that the second moment equals: 
$$\mathbb{E}[\rho_V(\Lambda)^2]=\vol(B)^2+\sum_{\alpha\in K^\times}D(\alpha)^{-t}\cdot\vol(B\cap\alpha^{-1} B),$$
where $D(\alpha)=[\OK:\alpha^{-1}\OK \cap\OK]$ is a lattice index measuring the denominator of $\alpha\in K^\times$ and where the scaling $\alpha^{-1} B$ occurs in $\KR$. In other words, $\alpha^{-1}$ multiplies the unit ball by $\sigma(\alpha^{-1})$ for the coordinate of $\KR$ corresponding to the field embedding $\sigma:K\to \IC$.\par
Since every $\alpha\in\mu(K)$ contributes exactly $\vol(B)$ to the sum, our task in achieving \eqref{eq:toshowmoment} reduces to estimating the  alleged error term:
\begin{equation}\label{eq:introoutline}
    \sum_{\alpha\in (K^\times\setminus\mu(K))/\mu(K)}D(\alpha)^{-t}\cdot\frac{\vol(B\cap\alpha^{-1} B)}{\vol(B)}.
\end{equation}
Suppose now that we are simply summing over units $\alpha\in\OK^\times$, so that we must estimate $\sum_{\alpha\in \OK^\times/\mu(K)}\frac{\vol(B\cap\alpha^{-1} B)}{\vol(B)}$. The body $B\cap\alpha^{-1} B$ intuitively should be comparable to the ellipsoid where for every embedding $\sigma:K\to \IC$ the length of the semi-axis in that direction is the radius of $B$ divided by $\max(1,\vert\sigma(\alpha)\vert)$. This naturally leads one to consider the notion of Weil height, a measure of the complexity of an algebraic number, and we set: 
$$H_\infty(\alpha) := \prod_{\sigma: K \rightarrow \mathbb{C}} \max\{1, |\sigma(\alpha) | \}.$$
This is closely related to the notion in the literature of (unnormalised) exponential Weil height of an algebraic number $\alpha\in K^\times$ given by the product over the set of places $M_K$ of $K$:
\begin{equation}
H_W(\alpha) := \prod_{v\in M_K} \max\{1, |\alpha |_v\}.
\end{equation}
It is easily seen that $H_W(\alpha)= H_\infty(\alpha)$ for $\alpha\in\OK$ and in general the two differ by a denominator. 
It is customary to work with \textbf{absolute} notions of height which do not depend on the particular subfield one is viewing an algebraic number in and to take logarithms. Writing $\deg(\alpha)=[\mathbb{Q}(\alpha):\mathbb{Q}]$ we arrive at the \textbf{Weil height}
$$h(\alpha)=\ln\left(H_W(\alpha)^{1/\deg(\alpha)}\right)$$
of an algebraic number. For non-integers, we write $h_\infty(\alpha)$ for $\ln\left(H_\infty(\alpha)^{1/\deg(\alpha)}\right)$. It follows from these definitions that we have the relation 
\begin{equation}
    h(\alpha)=h_\infty(\alpha)+ \tfrac{1}{\deg(K)}\cdot \ln(D(\alpha)), 
\end{equation}
where recall that we write $D(\alpha)=[\OK:\alpha^{-1}\OK \cap\OK]$ for the contribution from denominators at non-Archimedean places. 
\par

We intuitively expect to relate  $\frac{\vol(B\cap\alpha^{-1} S)}{\vol(B)}$ to $h_\infty(\alpha)$. Denoting henceforth by $\N(\alpha)$ the norm of the fractional ideal generated by $\alpha$, we indeed have the following result (see \cite[Lemma 35]{GSV23} for the proof of a more general statement, which draws on the arithmetic/geometric mean inequality): 
\begin{lemma}\label{lemma:genvolumeratio}
Let $\alpha\in K^\times$. We have the bound:
$$
\frac{\vol(B \cap \alpha^{-1}B)}{\vol(B)}\le\left(\frac{e^{2h_\infty(\alpha)}+e^{-2h_\infty(\alpha)}\cdot \N(\alpha)^{\frac{2}{d}}}{2}\right)^{-dt/2}$$
and moreover, under the assumption that $\N(\alpha)\geq 1$, we have for any $k\geq 2$:
$$ \frac{\vol(B \cap \alpha^{-1}B)}{\vol(B)}\le\N(\alpha)^{\frac{-t}{k}}\cdot \left(\frac{e^{2h_\infty(\alpha)\cdot (k-1)/k}+e^{-2h_\infty(\alpha)\cdot (k-1)/k}\cdot \N(\alpha)^{\frac{2}{d}}}{2}\right)^{-dt/2}.$$

\end{lemma}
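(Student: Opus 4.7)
The strategy combines three ingredients: (a) ellipsoid containment arising from convex combinations of the defining inequalities; (b) the Minkowski--AM--GM product-sum inequality
\[\Bigl(\prod_\sigma(a_\sigma+b_\sigma)\Bigr)^{1/d}\geq\Bigl(\prod_\sigma a_\sigma\Bigr)^{1/d}+\Bigl(\prod_\sigma b_\sigma\Bigr)^{1/d}\quad(a_\sigma,b_\sigma>0),\]
provable by AM--GM on the weights $a_\sigma/(a_\sigma+b_\sigma)$ and $b_\sigma/(a_\sigma+b_\sigma)$; and (c) the identities $\prod_\sigma\max(1,|\sigma(\alpha)|)=e^{d\,h_\infty(\alpha)}$ and $\max(1,|\sigma|)\min(1,|\sigma|)=|\sigma|$, hence $\prod_\sigma\min(1,|\sigma(\alpha)|)=|\N(\alpha)|e^{-d\,h_\infty(\alpha)}$. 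Decomposing $v\in\KR^t$ into blocks $\|v\|_\sigma^2$ per embedding $\sigma:K\to\IC$, the two ball conditions read $\sum_\sigma\|v\|_\sigma^2\le R^2$ and $\sum_\sigma|\sigma(\alpha)|^2\|v\|_\sigma^2\le R^2$, so that $B\cap\alpha^{-1}B\subseteq E_\lambda$ for every $\lambda\in[0,1]$, with volume ratio $\vol(E_\lambda)/\vol(B)=\prod_{\sigma:K\to\IC}((1-\lambda)+\lambda|\sigma(\alpha)|^2)^{-t/2}$.

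\emph{First inequality.} Take $\lambda=1/2$ and split
\[\frac{1+|\sigma(\alpha)|^2}{2}=\frac{\max(1,|\sigma(\alpha)|)^2}{2}+\frac{\min(1,|\sigma(\alpha)|)^2}{2}.\]
Apply the product-sum inequality with $a_\sigma=\max(1,|\sigma(\alpha)|)^2/2$ and $b_\sigma=\min(1,|\sigma(\alpha)|)^2/2$: identities (c) give $(\prod a)^{1/d}=e^{2h_\infty(\alpha)}/2$ and $(\prod b)^{1/d}=e^{-2h_\infty(\alpha)}|\N(\alpha)|^{2/d}/2$. Combining with the ellipsoid bound and raising to the power $-t/2$ yields the first inequality.

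\emph{Second inequality.} H\"older's inequality applied to the two ball constraints with conjugate exponents $k$ and $k/(k-1)$ yields the extra constraint $\sum_\sigma|\sigma(\alpha)|^{2/k}\|v\|_\sigma^2\le R^2$ on $v\in B\cap\alpha^{-1}B$, producing a separate ellipsoid containment with volume ratio $|\N(\alpha)|^{-t/k}$. One then combines this Hölder constraint with the $\lambda=1/k$ ellipsoid via a suitably weighted convex combination and applies the product-sum inequality with the splitting
\[a_\sigma=\frac{\max(1,|\sigma(\alpha)|)^{2(k-1)/k}}{2},\quad b_\sigma=\frac{\max(1,|\sigma(\alpha)|)^{2/k}\min(1,|\sigma(\alpha)|)^{2}}{2},\]
for which identities (c) give $(\prod a)^{1/d}=e^{2h_\infty(\alpha)(k-1)/k}/2$ and $(\prod b)^{1/d}=e^{-2h_\infty(\alpha)(k-1)/k}|\N(\alpha)|^{2/d}/2$, reproducing precisely the $X$ and $Y$ appearing in the claimed bound.

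\emph{Main obstacle.} The hardest aspect is exactly this second step. The naive route of applying the first inequality to the formal element $\alpha^{(k-1)/k}\in\KR^\times$ (whose Weil height scales correctly as $h_\infty(\alpha)\cdot(k-1)/k$) produces the wrong norm exponent $|\N(\alpha)|^{2(k-1)/(kd)}$ in place of the desired $|\N(\alpha)|^{2/d}$. Obtaining the latter requires weaving the Hölder-derived prefactor $|\N(\alpha)|^{-t/k}$ into the Minkowski--AM--GM estimate through the asymmetric splitting of $a_\sigma$ and $b_\sigma$ above; the hypothesis $|\N(\alpha)|\ge 1$ is used precisely to ensure the relevant comparisons go in the right direction.
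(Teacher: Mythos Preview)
The paper does not prove this lemma in the text; it only cites \cite[Lemma 35]{GSV23} and remarks that the argument ``draws on the arithmetic/geometric mean inequality.'' So the comparison is really between your sketch and the expected AM--GM strategy.

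Your treatment of the \emph{first} inequality is correct and is exactly the intended argument: the convex combination $\tfrac{1}{2}A+\tfrac{1}{2}B$ gives the ellipsoid with coefficients $\tfrac{1+|\sigma(\alpha)|^2}{2}=\tfrac{M_\sigma^2+m_\sigma^2}{2}$, and the Minkowski--AM--GM product inequality with the max/min splitting yields the stated bound.

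For the \emph{second} inequality there is a genuine gap. You correctly compute that the splitting
\[
a_\sigma=\tfrac{1}{2}M_\sigma^{2(k-1)/k},\qquad b_\sigma=\tfrac{1}{2}M_\sigma^{2/k}m_\sigma^2
\]
has geometric means $(\prod a)^{1/d}=e^{2h_\infty(\alpha)(k-1)/k}/2$ and $(\prod b)^{1/d}=e^{-2h_\infty(\alpha)(k-1)/k}\N(\alpha)^{2/d}/2$, which are precisely the two summands in the target. But to invoke Minkowski you must exhibit a \emph{valid} ellipsoid coefficient $c_\sigma$ with $c_\sigma\ge |\sigma(\alpha)|^{2/k}(a_\sigma+b_\sigma)$ and $\sum_\sigma c_\sigma x_\sigma\le R^2$ on $B\cap\alpha^{-1}B$. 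By LP duality the valid coefficients are exactly those dominated by $(1-\lambda)+\lambda|\sigma(\alpha)|^2$ for some $\lambda\in[0,1]$; in particular any H\"older-derived constraint $|\sigma(\alpha)|^{2\theta}$ is already dominated by such a linear combination and gives nothing new. Now for $|\sigma(\alpha)|<1$ your required coefficient is
\[
|\sigma(\alpha)|^{2/k}(a_\sigma+b_\sigma)=\tfrac{1}{2}\bigl(|\sigma(\alpha)|^{2/k}+|\sigma(\alpha)|^{2+2/k}\bigr),
\]
and the term $|\sigma(\alpha)|^{2+2/k}$ corresponds to an exponent $\theta=1+1/k>1$, which no convex combination of the two ball constraints can produce. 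Concretely, for $d=2$, $|\sigma_1(\alpha)|=2$, $|\sigma_2(\alpha)|=(1.001)^{-1}$, $k=2$, the $\lambda=1/2$ ellipsoid gives $\prod_\sigma\tfrac{1+|\sigma|^2}{2}\approx 2.49$, while $\N^{2/k}X^d\approx 3.99$; so the fixed choice $\lambda=1/2$ is insufficient, and your phrase ``a suitably weighted convex combination'' hides the real work (one must in effect optimise over $\lambda$ and argue that $\max_\lambda\prod_\sigma((1-\lambda)+\lambda|\sigma|^2)\ge \N^{2/k}X^d$, which is not addressed).

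One useful remark: the route you label ``naive'' --- take the $\tfrac{1}{2}B+\tfrac{1}{2}C$ ellipsoid with coefficient $\tfrac{1}{2}(|\sigma|^2+|\sigma|^{2/k})=|\sigma|^{2/k}\cdot\tfrac{1+|\sigma|^{2(k-1)/k}}{2}$ and apply the first inequality verbatim to the formal element $\alpha^{(k-1)/k}$ --- does give a \emph{correct} inequality, namely the second bound with $\N(\alpha)^{2(k-1)/(kd)}$ in place of $\N(\alpha)^{2/d}$. Since $\N(\alpha)\ge 1$ this is weaker than what is stated, but it already implies the $\cosh$-bound $\N(\alpha)^{-t/k}\cosh\!\bigl(2h_\infty(\alpha)(1-\tfrac{1}{k})\bigr)^{-dt/2}$, which is all that the paper actually uses downstream (see the proof of Proposition~\ref{prop:mintersections}). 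So your ``wrong exponent'' version is a complete proof of everything needed later; it is only the sharper $\N(\alpha)^{2/d}$ as literally stated that your argument does not reach.
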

In light of this result, our moment estimates will be related to height lower bounds for the number fields involved. Fortunately, this is a well-studied subject. For instance, Lehmer's famous problem asks for a \textbf{uniform} lower bound for $h(\alpha)\deg(\alpha)$ for all algebraic numbers which are not roots of unity (the latter are easily seen to have trivial height). At any rate, for a fixed number field, we may always find:  
\begin{equation}\label{eq:cdef}
    0<c(K)<\inf_{\alpha\in K^\times\setminus \mu(K)} h(\alpha).
\end{equation}
Indeed, for instance P. Voutier \cite{Voutier96}, building on work of Dobrowolski \cite{D1979}, showed that any
\begin{equation}\label{eq:cbound}
    c(K)< \frac{1}{4\deg(K)}\cdot \left(\frac{\ln\ln \deg(K)}{\ln  \deg(K)}\right)^3
\end{equation}
satisfies \eqref{eq:cdef}.
\subsection{Unit counts and unit contribution}
One of the difficulties in this setting is dealing with contributions from infinite unit groups. The next key technical ingredient is an upper bound for the number of units of bounded height: 
\begin{lemma}\label{le:unitcountSK}
    Let $S_K $ denote a fixed, finite subset of $\OK^\times $ that is closed under inversion. Let  $c_S(K)<\inf_{\alpha\in \OK^\times\setminus S_K}h_\infty(\alpha)$ be a positive height lower bound valid outside of $S_K$. 
    Consider the canonical log embedding: 
$L:K^\times\to \mathbb{R}^{r_1+r_2}$ defined by mapping
$$\alpha\mapsto (\ln \vert\sigma_1(\alpha)\vert,\ldots,2\ln \vert\sigma_{r_1+r_2}(\alpha)\vert),$$
as well as the function
\begin{align}
    h: \mathbb{R}^{r_1+r_2}&\to \mathbb{R}_{\geq 0} \\
    x&\mapsto \tfrac{1}{[K:\mathbb{Q}]}\cdot\sum_{j=1}^{r_1+r_2}\max(0,x_{j}).
\end{align}
    Then for any $\eta\in \mathbb{R}^{r_1+r_2}$ with $\sum_{j=1}^{r_1+r_2}\eta_j\geq 0$ and any $X\geq 0$ we have that
    $$\card \{\beta\in\mathcal{O}_K^\times~\vert~ h(\eta+L(\beta))\leq X\}\leq \# S_K\cdot \left(\frac{X+c_S(K)/2}{c_S(K)/2}\right)^{r_1+r_2-1}.$$
\end{lemma}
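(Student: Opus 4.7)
The plan is to prove the bound via a volume packing argument for $h$-balls in the log-unit lattice $L(\OK^\times)$. First, I would reduce to the case $\eta \in W := \{x \in \R^{r_1+r_2} : \sum_{j=1}^{r_1+r_2} x_j = 0\}$: writing $\eta = \eta_0 + \bar\eta \cdot (1,\dots,1)$ with $\bar\eta = \tfrac{1}{r_1+r_2}\sum_j \eta_j \geq 0$ and $\eta_0 \in W$, the pointwise monotonicity $\max(0, a+\bar\eta) \geq \max(0,a)$ for $\bar\eta \geq 0$ gives $h(\eta + y) \geq h(\eta_0 + y)$ for every $y$, so any upper bound proved in the case $\eta \in W$ applies in general. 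This reduction is exactly where the hypothesis $\sum_j \eta_j \geq 0$ is used.

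Next, I would verify that the restriction of $h$ to $W$ is an honest norm. Positive homogeneity and subadditivity hold on all of $\R^{r_1+r_2}$ since $\max(0,\cdot)$ is positively homogeneous and $\max(0,a+b) \leq \max(0,a) + \max(0,b)$. The identity $h(y) - h(-y) = \tfrac{1}{d}\sum_j y_j$ shows $h$ is symmetric on $W$, and $h(y)=0$ together with $\sum_j y_j=0$ forces $y=0$. Dirichlet's unit theorem then identifies $\Lambda^* := L(\OK^\times) \subset W$ as a lattice of full rank $r_1 + r_2 - 1$ with kernel $\mu(K)$, so the count in the lemma equals $\omega_K \cdot N$, where $N := \card \{y \in \Lambda^* : h(\eta + y) \leq X\}$.

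With $h$ now playing the role of a norm on $W$, the classical packing argument runs: around each qualifying $y$, place the $h$-ball $B_h(y,c_S(K)/2)$; subadditivity puts all of these inside the translated ball $-\eta + B_h(X + c_S(K)/2)$, whose volume scales as $(X+c_S(K)/2)^{r_1+r_2-1}$ by positive homogeneity. If two such small balls meet, the triangle inequality and symmetry force $h(y_i - y_j) \leq c_S(K)$, so that $y_i - y_j$ lies in $\Lambda^*_{\leq c_S} := \{\lambda \in \Lambda^* : h(\lambda) \leq c_S(K)\}$; this bounds the overlap multiplicity by $\card \Lambda^*_{\leq c_S}$ and a volume comparison yields $N \leq \card \Lambda^*_{\leq c_S} \cdot \bigl(\tfrac{X+c_S(K)/2}{c_S(K)/2}\bigr)^{r_1+r_2-1}$. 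The concluding observation, which I expect to be the main subtlety and the reason the right-hand side features $\card S_K$ rather than a quantity larger by a factor of $\omega_K$, is that $\omega_K \cdot \card \Lambda^*_{\leq c_S}$ counts exactly the units $\beta \in \OK^\times$ with $h_\infty(\beta) \leq c_S(K)$; by the contrapositive of $c_S(K) < \inf_{\alpha \notin S_K} h_\infty(\alpha)$, all such units lie in $S_K$, so $\omega_K \cdot \card \Lambda^*_{\leq c_S} \leq \card S_K$, and multiplying the packing bound through by $\omega_K$ yields the claimed estimate.
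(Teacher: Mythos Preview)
Your proof is correct and follows essentially the same packing argument as the paper: both place $h$-balls of radius $c_S(K)/2$ around qualifying log-unit points, bound the overlap multiplicity via the height gap defining $S_K$, and compare volumes using the homogeneity of $h$ on the trace-zero hyperplane. The only cosmetic difference is that you reduce $\eta$ to the hyperplane $W$ at the outset via the monotonicity $\max(0,a+\bar\eta)\geq\max(0,a)$, whereas the paper keeps $\eta$ general and invokes the same monotonicity at the end to bound $\vol(Q)\leq\vol(P_X)$; likewise you index balls by lattice points in $L(\OK^\times)$ and recover the factor $\#S_K$ via $\omega_K\cdot\#\Lambda^*_{\leq c_S}\leq\#S_K$, while the paper indexes directly by units $\beta$ and reads off multiplicity $\leq\#S_K$ from $\beta_1\beta_2^{-1}\in S_K$.
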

\begin{proof}
Note that the product formula implies that $L(\mathcal{O}_K^\times)$ is contained in the hyperplane $H:=\{x\in \mathbb{R}^{r_1+r_2}: \sum_{j=1}^{r_1+r_2}x_{j}=0\}$. Furthermore observe that $h$ satisfies the properties of a semi-norm on $H$. 
By assumption, for any $\beta\in \mathcal{O}_K^\times\setminus S_K$ we have that 
$$h(L(\beta))=h_\infty(\beta)>c_S(K).$$
Let now $P=\{\xi\in H: h(\xi)\leq c_S(K)/2\}$. We claim that for $\eta\in \mathbb{R}^{r_1+r_2}$ and $\beta_1,\beta_2\in \mathcal{O}_K^\times$: 
$$(L(\beta_1)+\eta+P)\cap (L(\beta_2)+\eta+P)=\emptyset \text{ unless }\beta_1\beta_2^{-1}\in S_K
$$

To prove the claim, let $y$ be in the intersection. Then by the triangle inequality 
$$h(L(\beta_1^{-1}\beta_2))=h(L(\beta_2)-L(\beta_1))\leq h(L(\beta_2)+\eta-y)+h(-L(\beta_1)-\eta+y).$$
Given that $L(\beta_2)+\eta-y\in P$ and $-L(\beta_1)-\eta+y\in -P$ we deduce that $h(L(\beta_1^{-1}\beta_2))\leq c_s(K)$ and therefore $\beta_1\beta_2^{-1}\in S_K$, proving the claim.

Moreover, any $\beta\in \mathcal{O}_K^\times$ satisfying $h(\eta+L(\beta))\leq X$ implies that $L(\beta)+\eta+P$ is contained in the set 
$$Q=\{\xi \in \mathbb{R}^{r_1+r_2}:  \sum_{j=1}^{r_1+r_2}\xi_{j}=Y_\eta, h(\xi)\leq X+c_S(K)/2 \},$$
where $Y_\eta=\sum_{j=1}^{r_1+r_2}\eta_j$. For any fixed $\eta$, we thus obtain by the claim that 
$$\card \left\{\beta\in\mathcal{O}_K^\times\vert h(\eta+L(\beta))\leq X\right\}\leq \# S_K\cdot\frac{\vol(Q)}{\vol(P)},$$
where the volumes are computed with respect to the natural measure identifying the hyperspaces $P$ and $Q$ are in with $\mathbb{R}^{r_1+r_2-1}$. \par 
For $\eta$ such that $Y_\eta=\sum_{j=1}^{r_1+r_2}\eta_j\geq 0 $, it is easy to see that the volume of $Q$ is bounded by the volume of $P_X= \{\xi\in H: h(\xi)\leq c_S(K)/2+X\}$. 
Thus we bound the desired unit count by 
$$\# S_K\cdot\frac{\vol(P_X)}{\vol(P)}=\# S_K\cdot\frac{\vol(\{\xi\in H: h(\xi)\leq c_S(K)/2+X\})}{\vol(\{\xi\in H: h(\xi)\leq c_S(K)/2\})}.$$
Since $H$ is an $\mathbb{R}$-vector space of dimension $r_1+r_2-1$ and $h$ a semi-norm on that vector space, the result follows. \par

\end{proof}
\begin{remark}
This is a slight generalization of \cite[Lemma 37]{GSV23} which establishes the result above for $S_K=\mu(K)$ the group of roots of unity in $K$. Allowing a slightly larger $S_K$ will allow to increase $c_S(K)$ and lead to better bounds. Note also that we must have $\mu(K)\subset S_K$ to have a positive height bound outside $S_K$.
\end{remark}
In summary, we will prove our results for the following setup, fixing notations: 
\begin{setup}\label{setupcyclo}
 Given a number field $K$, and a choice of finite subset $S_K$ closed under inversion and satisfying $\mu(K)\subseteq S_K\subset \OK^\times$, 
 there exists a choice of constants
 $$0<c(K)\leq c_o(K)\leq c_S(K)$$
 satisfying:
 \begin{eqnarray*}
      c(K)<\inf_{\alpha\in K^\times\setminus \mu(K)} h(\alpha)\\
      c_o(K)<\inf_{h_\infty(\alpha)> 0} h(\alpha)\\
      c_S(K)<\inf_{\alpha\in \OK^\times\setminus S_K}h(\alpha).
 \end{eqnarray*}
\end{setup}
It follows from our previous remarks that any fixed number field satisfies such a setup: 
\begin{remark}For any fixed number field $K$, the choice of $S_K=\mu(K)$, as well as $c_S(K)=c_o(K)=c(K)>0$ small enough as in  \eqref{eq:cdef} satisfies Setup \ref{setupcyclo}.
    
\end{remark}
However, for many number fields such as cyclotomic fields, making these distinctions will allow us to prove better bounds. Moreover, one can in many instances choose the constants uniformly for a family of number fields $K$. \par
We now turn to our main results in estimating the second moment. We note that there are two points of view that one may be interested in: first, we tailor our results to the perspective of varying the number fields involved, and aim to achieve uniform control of the moments throughout the family for $\OK$-rank as small as possible. Second, for fixed number field and fixed (or increasing) rank the results can be simplified-this will be discussed in Section \ref{sec:example}.

\begin{proposition}\label{prop:mintersections}
    Let a number field $K$ and let bounds as in setup \ref{setupcyclo} together with its notations be given. For any $k\geq 2$ there exist positive constants $C,\varepsilon_1>0$ uniformly bounded in $t$ such that for all $\alpha\in K^\times\setminus \mu(K)$ with $N(\alpha)\geq 1$ the following holds:  write
     $$t_0=\frac{2r_K}{d}\cdot\frac{ \ln(1+\frac{2c_o(K)}{c_S(K)}+\tfrac{1}{k^3})}{\ln(\cosh(2\cdot c_o(K)\cdot(1-\frac{1}{k}))},$$
     where $r_K$ is the rank of the unit group of $K$.
Then we have for any $t> t_0$ and any $d\geq 1$ that
\begin{align}
 \sum_{\substack{\beta\in\mathcal{O}_K^\times\\ \alpha\beta\notin \mu(K)}}\frac{\vol(B\cap(\alpha\beta)^{-1}B)}{\vol(B)} 
 \leq 
 C\cdot\card S_K\cdot \N(\alpha)^\frac{-t}{k}\cdot D(\alpha)^{\frac{t}{2}}\cdot  e^{-\varepsilon_1\cdot d\cdot (t-t_0)}.
\end{align}
Moreover, the constants can be made explicit. We may for instance take 
$$\varepsilon_1=\tfrac{5\cdot c_o(K)^2(k-1)^2}{6\cdot k^2}$$ and $C=1+k^{3}+\tfrac{1}{1-e^{-5\cdot c_o(K)^2\cdot d(t-t_0)(k-1)^2/(3 k^5)}}$.
 
\end{proposition}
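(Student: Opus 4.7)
The plan is to combine Lemmas \ref{lemma:genvolumeratio} and \ref{le:unitcountSK} via a shell decomposition in the log-embedding space. First, for any unit $\beta\in\OK^\times$ one has $N(\alpha\beta)=N(\alpha)$ and $D(\alpha\beta)=D(\alpha)$. Setting $a:=2(k-1)/k$, $y_\beta:=h_\infty(\alpha\beta)$, and $y^*:=(ad)^{-1}\ln N(\alpha)\geq 0$, the second bound in Lemma \ref{lemma:genvolumeratio} together with the algebraic identity $\tfrac{1}{2}(e^{ay}+e^{-ay}N(\alpha)^{2/d})=N(\alpha)^{1/d}\cosh(a(y-y^*))$ yields
$$\frac{\vol(B\cap(\alpha\beta)^{-1}B)}{\vol(B)}\leq N(\alpha)^{-t/k}\cdot N(\alpha)^{-t/2}\cdot\cosh\bigl(a(y_\beta-y^*)\bigr)^{-dt/2}.$$
Since $\alpha\OK=IJ^{-1}$ with coprime integral ideals $I,J$, we have $N(I)=N(\alpha)D(\alpha)\geq 1$, whence $N(\alpha)^{-t/2}\leq D(\alpha)^{t/2}$. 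It thus suffices to show
$$\Sigma:=\sum_{\beta:\,\alpha\beta\notin\mu(K)}\cosh\bigl(a(y_\beta-y^*)\bigr)^{-dt/2}\leq C\cdot\#S_K\cdot e^{-\varepsilon_1 d(t-t_0)}.$$

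The product formula gives $\sum_\sigma\ln|\sigma(\alpha\beta)|=\ln N(\alpha)\geq 0$, forcing $y_\beta\geq\ln N(\alpha)/d=ay^*\geq y^*$; hence only shells with $y_\beta-y^*\geq 0$ contribute. Partition the units by $\mathcal{S}_j:=\{\beta:y_\beta-y^*\in[jc_o(K),(j+1)c_o(K))\}$ for $j\geq 0$. Taking $\eta:=L(\alpha)$, whose coordinate sum $\ln N(\alpha)\geq 0$ satisfies the hypothesis of Lemma \ref{le:unitcountSK}, one has $\#\{\beta:y_\beta\leq X\}\leq\#S_K(1+2X/c_S(K))^{r_K}$. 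When $a>1$, the constraint $y_\beta\geq ay^*$ empties shells with $(j+1)c_o(K)<(a-1)y^*$; for the non-empty ones $y^*\leq(j+1)c_o(K)/(a-1)$, so the naive count $\#S_K(1+2(y^*+(j+1)c_o(K))/c_S(K))^{r_K}$ can be replaced by an expression in $j,c_o(K),c_S(K),k$ only. Within $\mathcal{S}_j$, the inductive inequality $\cosh(jx)\geq\cosh(x)^j$ yields $\cosh(a(y_\beta-y^*))^{-dt/2}\leq\cosh(ac_o(K))^{-jdt/2}$.

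Summing the shell contributions produces a geometric-type series whose convergence threshold is precisely $\cosh(ac_o(K))^{dt/2}>(1+2c_o(K)/c_S(K)+1/k^3)^{r_K}$, i.e., $t>t_0$; the $1/k^3$ correction arises from absorbing the polynomial factor $(j+1)^{r_K}$ in the shell counts into the base of the geometric ratio. For the explicit decay rate I would invoke the elementary inequality $\ln\cosh(x)\geq\tfrac{5x^2}{12}$ (valid for $|x|$ below a small threshold, which can be arranged since $ac_o(K)$ is small in the relevant parameter regime), converting $\cosh(ac_o(K))^{-d(t-t_0)/2}$ into $e^{-\varepsilon_1 d(t-t_0)}$ with $\varepsilon_1=\tfrac{5a^2c_o(K)^2}{24}=\tfrac{5c_o(K)^2(k-1)^2}{6k^2}$. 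Summing the geometric tail explicitly gives the $(1-e^{-\cdots})^{-1}$ factor in $C$, while the $1+k^3$ term bounds the finitely many ``main'' shells uniformly. The main technical obstacle is the shell-counting book-keeping, particularly in the limit $a\to 1$ (i.e., $k=2$) and for $\alpha$ with large $N(\alpha)$: there the naive unit count is polynomial in $y^*$, and uniformity in $\alpha$ requires carefully exploiting the discrete structure of $L(\OK^\times)$ relative to $L(\alpha)$, together with the precise $1/k^3$ correction built into the definition of $t_0$.
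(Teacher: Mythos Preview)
Your algebraic rewriting via $\tfrac{1}{2}(e^{ay}+e^{-ay}N(\alpha)^{2/d})=N(\alpha)^{1/d}\cosh(a(y-y^*))$ is correct, but the resulting shell argument has a genuine gap at the bottom. In the shell $j=0$ one has $\cosh(a(y_\beta-y^*))^{-dt/2}$ bounded only by $1$, so that shell's contribution is of order $\#S_K\cdot\bigl(1+2(y^*+c_o(K))/c_S(K)\bigr)^{r_K}$, which is exponential in $r_K$ and \emph{independent of $t$}. This cannot be absorbed into $C\cdot\#S_K\cdot e^{-\varepsilon_1 d(t-t_0)}$ with $C$ uniformly bounded as $t\to\infty$. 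Your remark that ``the $1+k^3$ term bounds the finitely many main shells uniformly'' conflates the number of shells with their size: each low shell must itself decay in $t$, not merely be counted. The same problem recurs, in weaker form, for $j=1$: with shell width $c_o(K)$ the count base is $1+4c_o(K)/c_S(K)$ rather than $1+2c_o(K)/c_S(K)$, so even ignoring $j=0$ your threshold would exceed the stated $t_0$.

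The paper's proof handles this differently, and this is precisely where the constants $c(K)$ and $c_o(K)$ of Setup~\ref{setupcyclo} do their work (you use $c_o(K)$ only as a shell width and never invoke it as a height lower bound). The paper keeps the \emph{unshifted} $\cosh(a\,h_\infty(\alpha\beta))^{-dt/2}$ and, in the small-$h_\infty$ regime, uses the relation $h_\infty(\alpha\beta)+\tfrac{1}{d}\ln D(\alpha)\geq c_o(K)$ (respectively $\geq c(K)$) to force $D(\alpha)$ large; the factor $D(\alpha)^{-t/2}$ then supplies the missing exponential decay in $t$. In other words, the $D(\alpha)^{t/2}$ on the right of the proposition is not obtained by discarding $N(\alpha)^{-t/2}\leq D(\alpha)^{t/2}$ as you do, but is \emph{earned} from the height lower bounds exactly where the $\cosh$ term is useless. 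The precise $t_0$ then emerges from the worst interval, namely around $h_\infty\approx c_o(K)$, after subdividing $[c_S(K)/2,c_S(K)]$ into $A=k^3$ fine pieces; the $1/k^3$ in $t_0$ is the discretisation error from this subdivision, not an artefact of absorbing polynomial growth.
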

\begin{proof}
The proof consists of using Lemma \ref{le:unitcountSK} to bound the number of units in a given height range, which grows exponentially in $d$, and compare it with the forced decay due to height (and denominator) from our geometric Lemma \ref{lemma:genvolumeratio}, the latter decay being exponential in $d\cdot t$. This yields a minimal rank $t_0$ for which when $t>t_0$ our sum over the unit group can be bounded by a geometric sum. We deal with different height ranges separately in order to make the minimal rank as small as possible. \par 
More precisely, by Lemma \ref{lemma:genvolumeratio} our task is reduced to bounding for suitably chosen $k\geq 2$: 
    \begin{equation}\label{eq:projtobound}
        \sum_{\substack{\beta\in\mathcal{O}_K^\times \\ \alpha\beta \notin \mu(K)}}\N(\alpha)^\frac{-t}{k}\cdot \cosh\left( h_\infty(\alpha\beta)\left( 2(1-\tfrac{1}{k})\right)\right)^{-dt/2}.
    \end{equation}
     Since $\cosh(x)$ is increasing for $x>0$ it suffices to bound, for any $A \in \mathbb{Z}_{>0}$, 
    the sum
$$\Sigma_{k}^\infty=
\sum_{m=A}^{\infty}\card\left\{\beta\in\mathcal{O}_K^\times\mid h_\infty(\alpha\beta)\in \Big[\tfrac{mc_S(K)}{A}, \tfrac{(2m+1)c_S(K)}{2A}\Big[~\right\} \cdot 
\cosh\left(\frac{2mc_S(K)}{Ak/(k-1)}\right)^{\frac{-dt}{2}}$$
    together with the contribution of the remaining units satisfying $h_\infty(\alpha\beta)< c_S(K)$:
   
	    \begin{equation}
	      \Sigma_{k}^{c_S} = 
\sum_{\substack{  \beta\in\mathcal{O}_K^\times \\ h_\infty(\alpha\beta)< c_S(K)~ }} 
\cosh\left(  h_{\infty}(\alpha \beta)  \cdot  2 (1-\tfrac{1}{k})\right)^{-dt/2}.
	    \end{equation}

	    Let us examine the term $\Sigma_{k}^{c_S}$ first. For points of very small height we can show (using the triangle inequality for heights as in Lemma \ref{le:unitcountSK}) that 
\begin{equation}\label{eq:polycount}\card \{\beta\in\mathcal{O}_K^\times\vert h_\infty(\alpha\beta)\leq c_S(K)/2\}\leq \card S_K.
\end{equation}
   
 Moreover, by assumption, since $\alpha\beta\notin\mu(K)$ we have the height bound
  \begin{equation}\label{eq:genbound}
      h(\alpha\beta)=h_\infty(\alpha\beta)+ \tfrac{1}{d}\cdot \ln D(\alpha)\geq c(K)>0.
  \end{equation}
The terms of very small height therefore satisfy: 
\begin{align}
    &D(\alpha)^{-\frac{t}{2}}\cdot\sum_{\substack{  \beta\in\mathcal{O}_K^\times \\ h_\infty(\alpha\beta)< c_S(K)/2~ }}\cosh\left(  h_{\infty}(\alpha \beta)  \cdot  2 (1-\tfrac{1}{k})\right)^{-dt/2}\\
    &\leq \card S_K\cdot\exp\left(-dt\cdot\min\left(\tfrac{c(K)}{2}, \tfrac{1}{2}\cdot\ln(\cosh(2c(K)(\tfrac{k-1}{k}))\right)\right)
\end{align}
and are controlled for any rank $t$ when $\card S_K$ grows polynomially in $d$.
We now turn to the contribution of terms with $h_\infty(\alpha\beta)\in [c_S(K)/2,c_S(K)]$ which we partition into $A$ intervals $I_i=[\tfrac{c_S(K)(1+i/A)}{2},\tfrac{c_S(K)(1+(i+1)/A)}{2}]$ for $i=0,\ldots, A-1$.
By Lemma \ref{le:unitcountSK} we may bound
\begin{equation}\label{eq:ln3count}
    \card \{\beta\in\mathcal{O}_K^\times\vert h_\infty(\alpha\beta)\in I_i\}\leq \card S_K\cdot \left(2+\tfrac{i+1}{A}\right)^{r_K}.
\end{equation}
Recall that in our setup for  $h_\infty(\alpha\beta)\in I_i$ we have a combined bound on the height at Archimedean places and the denominators:
        $$h_\infty(\alpha\beta)+ \tfrac{1}{d}\cdot \ln D(\alpha)\geq c_o(K).$$
Therefore, we may estimate the contribution on each interval by 
\begin{align}
    &D(\alpha)^{-\frac{t}{2}}\cdot\sum_{\substack{  \beta\in\mathcal{O}_K^\times \\ h_\infty(\alpha\beta)\in I_i~ }}\cosh\left(  h_{\infty}(\alpha \beta)  \cdot  2 (1-\tfrac{1}{k})\right)^{-dt/2}\\
    &\leq \card S_K\cdot \left(2+\tfrac{i+1}{A}\right)^{r_K}\cdot \exp\left(-dt\cdot\left(D_{sav}+ \tfrac{1}{2}\cdot\ln(\cosh(\tfrac{c_S(K)(A+i)(k-1)}{Ak}))\right)\right),
\end{align}
where we wrote for savings from denominator terms  $$D_{sav}=\max\left(0,\tfrac{c_o(K)-c_S(K)(\frac{A+1+i}{2A})}{2}\right).$$
A small calculation shows that largest contribution comes from the interval with $c_o(K)\in I_i$, as it is the smallest $i$ with $D_{sav}=0$. We therefore arrive at the minimal $\OK$-rank condition of:
$$t_0\geq\frac{2r_K}{d}\cdot \frac{ \ln(1+\frac{2c_o(K)}{c_S(K)}+\tfrac{1}{A})}{\ln(\cosh(2\cdot c_o(K)\cdot(1-\frac{1}{k}))}.$$

    It now follows that for $t>t_0$ the sum $D(\alpha)^{-\frac{t}{2}}\cdot \Sigma_{k}^{c_S}$ is bounded as claimed in the proposition. 
    \par 
    Finally, we turn to bounding the tail $\Sigma_{k}^{\infty}$. Lemma \ref{le:unitcountSK} with height bound $X=\tfrac{(2m+1)\cdot c_S(K)}{2A}$ yields:    
	    \begin{align}
		    \Sigma_{k}^\infty
		    &\leq \card S_K\cdot \sum_{m=A}^{\infty}\left(\tfrac{2m+A+1}{A}\right)^{r_K}\cdot \cosh(\tfrac{2m\cdot (k-1)}{A\cdot k}\cdot c_S(K))^{-dt/2}\\
		    &= \card S_K\cdot \sum_{n=1}^{\infty}\left(\tfrac{2n-1}{A}+3\right)^{r_K}\cdot \cosh(\tfrac{A+n-1}{A}\cdot 2c_S(K)\cdot (1-\tfrac{1}{k}))^{-dt/2}\\
				       &\leq \card S_K\cdot \sum_{n=1}^{\infty} \cosh(\tfrac{A+n-1}{A}\cdot 2c_S(K)\cdot (1-\tfrac{1}{k}))^{\frac{-d(t-t_0)}{2}}
				       \label{eq:geom_series}
	    \end{align}
    where we chose
    $$t_0\geq\frac{2r_K}{d}\cdot \sup_{n\in\mathbb{N}_{\geq 1}}\frac{ \ln(\frac{2n-1}{A}+3)}{\ln \left(\cosh(\frac{A+n-1}{A}\cdot 2\cdot c_S(K) \cdot (1-\frac{1}{k}) \right)}$$
    for suitable $k$. The logarithm ratio decays as $n$ increases and therefore it suffices to take 
     $$t_0\geq\frac{2r_K}{d}\cdot \frac{ \ln(\frac{1}{A}+3)}{\ln(\cosh(2\cdot c_S(K)\cdot(1-\frac{1}{k}))}.$$

     The sum in  \eqref{eq:geom_series} can now be bounded by a geometric series, for instance we may bound $\ln(\cosh(x))\geq \frac{5}{12}\min(x,x^2)$. Evaluating the geometric series and some bookkeeping yields the explicit constants above (we chose to present the results for the choice of $A=k^3$). 
   
\end{proof}
\subsection{Summing contributions over ideals}
Having obtained in proposition \ref{prop:mintersections} bounds for the second moment error terms for cosets modulo algebraic units, we now upgrade these sums to all contributions over $K^\times$ by summing over (principal) ideals. We utilize in particular the summation formula proved more generally in \cite[Proposition 42]{GSV23}: 
\begin{proposition}\label{prop:reformulation}
Let $\alpha\in K^\times$. For any function 
$f:K^{\times}\to \mathbb{R}$ and any $T\in \mathbb{R}_{> 1}$, the sum 
$$\sum_{\alpha\in K^\times}D(\alpha)^{-T}\cdot \vol(B\cap \alpha^{-1}B)\cdot f(\alpha)$$
equals: 
$$\zeta_K(T)^{-1}\cdot \sum_{\substack{\mathcal{I}\subset \calO_K\\ \mathcal{I} \text{integral ideal}}}\N(\mathcal{I})^{-T}\sum_{\alpha\in \mathcal{I}^{-1}\setminus\{0\}} \vol(B\cap \alpha^{-1}B)\cdot f(\alpha).$$
\end{proposition}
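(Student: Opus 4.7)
The plan is to prove this identity by Fubini-style reversal of summation, interpreting $D(\alpha)$ as the norm of a canonical integral denominator ideal associated to $\alpha$.

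First, I would verify the key algebraic observation underlying the identity: for $\alpha\in K^{\times}$, the set $\mathfrak{d}(\alpha):=\alpha^{-1}\OK\cap\OK$ is a nonzero integral ideal of $\OK$, and it is the \emph{denominator ideal} of $\alpha$ in the following sense. Writing the fractional ideal $(\alpha)=\mathfrak{n}\mathfrak{d}^{-1}$ with coprime integral $\mathfrak{n},\mathfrak{d}$, one checks that $\mathfrak{d}=\alpha^{-1}\OK\cap \OK$ by verifying both inclusions: the inclusion $\mathfrak{d}\subseteq \alpha^{-1}\OK\cap\OK$ is immediate since $\alpha\mathfrak{d}\subseteq \OK$, and the reverse inclusion follows from coprimality because $x\in\OK$ with $\alpha x\in \OK$ forces $\mathfrak{d}\mid x\mathfrak{n}$, hence $\mathfrak{d}\mid x$. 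Therefore $D(\alpha)=[\OK:\mathfrak{d}(\alpha)]=\N(\mathfrak{d}(\alpha))$.

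Next I would establish the indexing identity: an integral ideal $\mathcal{I}\subset\OK$ satisfies $\alpha\in\mathcal{I}^{-1}$ if and only if $\mathcal{I}\cdot\alpha\subseteq \OK$, which upon intersecting with $\OK$ (noting $\mathcal{I}\subseteq \OK$) is equivalent to $\mathcal{I}\subseteq \alpha^{-1}\OK\cap\OK=\mathfrak{d}(\alpha)$, i.e.\ to $\mathfrak{d}(\alpha)\mid \mathcal{I}$. Consequently, the integral ideals contributing to $\alpha$ on the right-hand side are exactly those of the form $\mathcal{I}=\mathfrak{d}(\alpha)\cdot\mathcal{J}$ with $\mathcal{J}\subset\OK$ an arbitrary integral ideal.

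With these two facts in hand, the proof is a reversal of summation. Setting $g(\alpha):=\vol(B\cap \alpha^{-1}B)\cdot f(\alpha)$ for brevity, I would swap the sums on the right-hand side to obtain
\begin{equation*}
\sum_{\mathcal{I}\subset \OK}\N(\mathcal{I})^{-T}\sum_{\alpha\in\mathcal{I}^{-1}\setminus\{0\}}g(\alpha)
=\sum_{\alpha\in K^\times}g(\alpha)\sum_{\mathcal{J}\subset \OK}\N(\mathfrak{d}(\alpha)\mathcal{J})^{-T}
=\zeta_K(T)\sum_{\alpha\in K^\times}D(\alpha)^{-T}g(\alpha),
\end{equation*}
using multiplicativity of the ideal norm and the definition of the Dedekind zeta function. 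Multiplying through by $\zeta_K(T)^{-1}$ yields the claim.

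The only substantive technical point, which is where I would be most careful, is the legitimacy of the Fubini-type interchange of the two sums. Since $T>1$, the zeta sum $\sum_{\mathcal{J}}\N(\mathcal{J})^{-T}$ converges absolutely, so the interchange is valid as soon as the original sum $\sum_{\alpha\in K^\times}D(\alpha)^{-T}\lvert g(\alpha)\rvert$ is finite—an implicit hypothesis to be noted. In the intended applications the factor $\vol(B\cap\alpha^{-1}B)\cdot f(\alpha)$ provides the decay needed (via Lemma~\ref{lemma:genvolumeratio} together with the height estimates of Proposition~\ref{prop:mintersections}), so the rearrangement is permissible and the identity follows.
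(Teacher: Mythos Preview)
Your argument is correct: identifying $\alpha^{-1}\OK\cap\OK$ with the denominator ideal $\mathfrak{d}(\alpha)$, characterizing $\alpha\in\mathcal{I}^{-1}$ as $\mathfrak{d}(\alpha)\mid\mathcal{I}$, and then swapping sums is exactly the natural route, and your caveat about absolute convergence (or nonnegativity, via Tonelli) is appropriate. The paper itself does not supply a proof of this proposition but simply cites \cite[Proposition 42]{GSV23}, so there is no in-paper argument to compare against; your proof is precisely the standard one.
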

Here, $\zeta_K(s)$ denotes the Dedekind zeta function of $K$. With this in hand, we obtain our main moment error term estimate:

\begin{proposition}\label{prop:projsumoverideals}
Let a number field $K$ and let bounds as in setup \ref{setupcyclo} together with its notations be given. For any $k\geq 3$ there exist positive constants $C,\varepsilon_1>0$ uniformly bounded in $t$ such that the following holds:  write
$$t_0=\max\left\{k,6,\frac{2r_K}{d}\cdot\frac{ \ln(1+\frac{2c_o(K)}{c_S(K)}+\tfrac{1}{k^3})}{\ln(\cosh(2\cdot c_o(K)\cdot(1-\frac{1}{k}))}\right\},$$
where $r_K$ is the rank of the unit group. For any $t> t_0$ we then have:
$$\sum_{\alpha\in K^\times\setminus\mu(K)} D(\alpha)^{-t}\frac{\vol(B \cap \alpha^{-1} B)}{\vol(B)}\leq C\card S_K\cdot\frac{\zeta_K(t(\frac{1}{2}-\frac{1}{k}))\cdot \zeta_K(\frac{t}{k})}{\zeta_K(\frac{t}{2})}\cdot e^{-\varepsilon\cdot d\cdot(t-t_0)}. $$
 We may moreover take 
$$\varepsilon=\tfrac{5\cdot c_o(K)^2(k-1)^2}{6\cdot k^2}$$ and $C=2\left(1+k^{3}+\tfrac{1}{1-e^{-5\cdot c_o(K)^2\cdot d(t-t_0)(k-1)^2/(3 k^5)}}\right)$.
\end{proposition}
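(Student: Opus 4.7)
My plan is to combine the summation identity of Proposition \ref{prop:reformulation} with the orbit-wise estimate of Proposition \ref{prop:mintersections}. The critical tactical choice is to invoke Proposition \ref{prop:reformulation} with parameter $T=t/2$ and test function $f(\alpha) = D(\alpha)^{-t/2}\mathbf{1}_{\alpha\notin\mu(K)}$. This splits the original weight $D(\alpha)^{-t}$ into two equal halves: one half is absorbed into the normalizing factor $\zeta_K(t/2)^{-1}$ produced by the formula, and the other is reserved to cancel the factor $D(\alpha)^{t/2}$ appearing in the bound of Proposition \ref{prop:mintersections}. After this reformulation, the quantity of interest reads
\begin{equation*}
\zeta_K(t/2)^{-1}\sum_{\mathcal{I}\subseteq \mathcal{O}_K}\N(\mathcal{I})^{-t/2}\sum_{\alpha\in\mathcal{I}^{-1}\setminus\mu(K)}D(\alpha)^{-t/2}\frac{\vol(B\cap\alpha^{-1}B)}{\vol(B)}.
\end{equation*}

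For each fixed integral ideal $\mathcal{I}$, I would decompose the inner sum into $\mathcal{O}_K^\times$-orbits. These orbits in $\mathcal{I}^{-1}\setminus\{0\}$ are parametrized bijectively by integral ideals $\mathfrak{a}$ in the ideal class $[\mathcal{I}]$, via the correspondence $(\alpha)=\mathfrak{a}\mathcal{I}^{-1}$ (so $|\N(\alpha)|=\N(\mathfrak{a})/\N(\mathcal{I})$). On orbits with $|\N(\alpha)|\geq 1$, Proposition \ref{prop:mintersections} applies and its $D(\alpha)^{t/2}$ factor exactly cancels the outer weight $D(\alpha)^{-t/2}$, leaving the clean per-orbit bound $C\cdot\card S_K\cdot(\N(\mathfrak{a})/\N(\mathcal{I}))^{-t/k}e^{-\varepsilon_1 d(t-t_0)}$. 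Summing over orbits, then over $\mathcal{I}$ with weight $\N(\mathcal{I})^{-t/2}$, and crudely upper-bounding by dropping both the ideal class constraint $[\mathfrak{a}]=[\mathcal{I}]$ and the norm inequality $\N(\mathfrak{a})\geq\N(\mathcal{I})$, the resulting double Dirichlet sum factors as $\zeta_K(t/2-t/k)\zeta_K(t/k)$. Combined with the $\zeta_K(t/2)^{-1}$ out front this yields exactly the ratio of Dedekind zeta functions appearing in the statement.

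The main obstacle I foresee is handling orbits with $|\N(\alpha)|<1$, which only occur for $\mathcal{I}\neq\mathcal{O}_K$ (since $\alpha\in\mathcal{O}_K\setminus\{0\}$ automatically satisfies $|\N(\alpha)|\in\mathbb{Z}_{\geq 1}$). Proposition \ref{prop:mintersections} does not apply directly here, but the elementary identity $\vol(B\cap\gamma^{-1}B) = |\N(\gamma)|^{-1}\vol(B\cap\gamma B)$, a consequence of the scaling behavior of Lebesgue measure under multiplication by $\gamma$, allows one to transfer the inner sum to the orbit of $\alpha^{-1}$, which does have $|\N(\alpha^{-1})|>1$ and is therefore amenable to Proposition \ref{prop:mintersections}. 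Unwinding the bookkeeping in the reduced-form parametrization $(\alpha)=\mathfrak{a}/\mathfrak{b}$ with $\N(\mathfrak{a})<\N(\mathfrak{b})$, and using the standard estimate that integral ideals of norm at most $X$ number $O(X)$, this secondary contribution is bounded by a constant multiple of $\card S_K\cdot\zeta_K(t/2-1)\cdot e^{-\varepsilon_1 d(t-t_0)}$. Convergence of $\zeta_K(t/2-1)$ is the source of the auxiliary constraint $t_0\geq 6$ in the statement. Since the main Dirichlet series $\zeta_K(t/2-t/k)\zeta_K(t/k)/\zeta_K(t/2)$ is at least $1$ and $\zeta_K(t/2-1)$ is bounded above by $\zeta_K(2)$ for $t\geq 6$, this secondary term absorbs into the principal bound by enlarging the constant $C$. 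The exponential rate $\varepsilon=5c_o(K)^2(k-1)^2/(6k^2)$ is inherited directly from Proposition \ref{prop:mintersections}, and the explicit constant $C$ in the statement reflects this combined bookkeeping.
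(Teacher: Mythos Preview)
Your core strategy---apply Proposition \ref{prop:reformulation} with $T=t/2$, break the inner sum into $\mathcal{O}_K^\times$-orbits, invoke Proposition \ref{prop:mintersections} on each orbit, and collapse the resulting double Dirichlet sum to the zeta ratio---matches the paper's proof exactly. The paper carries out the same steps in a slightly different order (first grouping by orbits, then invoking the reformulation at the level of $K^\times/\mathcal{O}_K^\times$), but the content is identical.

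The one point where the paper is cleaner concerns the orbits with $|\N(\alpha)|<1$. You propose a separate treatment via the scaling identity and then absorb the resulting $\zeta_K(t/2-1)$ into the main term. The paper instead observes at the very outset that the summand $D(\alpha)^{-t}\vol(B\cap\alpha^{-1}B)$ is \emph{invariant under $\alpha\mapsto\alpha^{-1}$}: writing $(\alpha)=\mathfrak{a}\mathfrak{b}^{-1}$ in lowest terms one has $D(\alpha)=\N(\mathfrak{b})$, $D(\alpha^{-1})=\N(\mathfrak{a})$, and $\vol(B\cap\alpha^{-1}B)=|\N_{K/\mathbb{Q}}(\alpha)|^{-t}\vol(B\cap\alpha B)$, so the two summands agree. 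Hence one simply bounds by twice the sum restricted to $\N(\alpha)\geq 1$---this is precisely the factor of $2$ in the stated constant $C$, and no secondary Dirichlet series or extra bookkeeping is needed. Two small corrections to your write-up: your scaling identity should read $|\N(\gamma)|^{-t}$ rather than $|\N(\gamma)|^{-1}$ (you are in $K_\mathbb{R}^t$), and the constraint $t_0\geq\max\{k,6\}$ comes from convergence of $\zeta_K(t(\tfrac12-\tfrac1k))$ and $\zeta_K(\tfrac tk)$ in the main bound (at $k=3$ the first forces $t>6$), not from your auxiliary $\zeta_K(t/2-1)$, which would only require $t>4$.
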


\begin{proof}

   It is easy to see that the quantity $D(\alpha)^{-t}\vol(B \cap \alpha^{-1} B)$ is invariant under inversion $\alpha\mapsto\alpha^{-1}$, so that we may just bound by twice the sum under the additional assumption $\N(\alpha)\geq 1$. Proposition \ref{prop:mintersections} (constants $C,\varepsilon_1$ as in \ref{prop:mintersections}), reduces our task to estimating the sum
  
  $$ \sum_{\substack{\alpha\in K^\times/\calO_K^\times\\ \N(\alpha)\geq 1}} C\cdot D(\alpha)^{-t}\N(\alpha)^\frac{-t}{k}\cdot D(\alpha)^{\frac{t}{2}}\cdot e^{-\varepsilon_1\cdot d\cdot(t-t_0)}.$$
  By Proposition \ref{prop:reformulation} with $T=\tfrac{t}{2}$, it therefore suffices to bound 

$$\zeta_K(\tfrac{t}{2})^{-1}\cdot \sum_{\substack{\mathcal{I}\subset \calO_K\\ \mathcal{I} \text{ integral ideal}}}\N(\mathcal{I})^{-\frac{t}{2}}\sum_{\substack{\alpha\in (\mathcal{I}^{-1}\setminus\{0\})/\calO_K^\times\\\\ \N(\alpha)\geq 1}}\N(\alpha)^\frac{-t}{k}\cdot e^{-\varepsilon_1\cdot d\cdot(t-t_0)} .$$
Now observe that the map $\alpha\mapsto(\alpha)\cdot \mathcal{I}$ gives a bijection between $(\mathcal{I}^{-1}\setminus\{0\})/\calO_K^\times$ and integral ideals $\mathcal{J}\subset \calO_K$ in the ideal class of $\mathcal{I}$. We may therefore bound this expression by: 

$$\zeta_K(\tfrac{t}{2})^{-1}\cdot\sum_{\substack{\mathcal{I} \subset \calO_K\\ \mathcal{I} \text{ integral ideal}}}\N(\mathcal{I})^{-\frac{t}{2}}\sum_{\substack{\mathcal{J}\subset \calO_K\\ \N(\mathcal{I})\leq \N(\mathcal{J})}}\N(\mathcal{J}\mathcal{I}^{-1})^{-\frac{t}{k}}\cdot e^{-\varepsilon_1\cdot d\cdot(t-t_0)}$$
and therefore as claimed by 
$$ \frac{\zeta_K(t(\frac{1}{2}-\frac{1}{k}))\cdot \zeta_K(\frac{t}{k})}{\zeta_K(\frac{t}{2})}\cdot e^{-\varepsilon_1\cdot d\cdot(t-t_0)}.$$
The result follows (with some bookkeeping of the claimed constants) by noting that Dedekind zeta functions converge for real values $s>1$.  
  
\end{proof}
We may therefore put everything together and obtain the second moment result in its abstract form for an arbitrary number field: 
\begin{theorem}\label{thm:mainsecondmoment}
  Let a number field $K$ and let bounds as in setup \ref{setupcyclo} together with its notations be given. For any $k\geq 3$ there exist positive constants $C,\varepsilon_1>0$ uniformly bounded in $t$ such that the following holds:  write
$$t_0=\max\left\{k,6,\frac{2r_K}{d}\cdot\frac{ \ln(1+\frac{2c_o(K)}{c_S(K)}+\tfrac{1}{k^3})}{\ln(\cosh(2\cdot c_o(K)\cdot(1-\frac{1}{k}))}\right\},$$
where $r_K$ is the rank of the unit group. For any $t> t_0$ we then have that the second moment $\mathbb{E}[\rho_V(\Lambda)^2]$ of the number of nonzero lattice points in a fixed origin-centered ball of volume $V$ in $K_\mathbb{R}^t$ for a Haar-random module lattice of rank $t$ over $\calO_K$ satisfies: 
\begin{align}
       V^2+\omega_K\cdot V&\leq \mathbb{E}[\rho_V(\Lambda)^2]\\
       &\leq   V^2+\omega_K\cdot V+\omega_K\cdot S_K \cdot Z(K,t,k)\cdot e^{-\varepsilon\cdot d\cdot (t-t_0)}\cdot V,\\
    \end{align}
    where $0\leq Z(K,t,k)\leq\zeta_K\big(t(\frac{1}{2}-\frac{1}{k})\big)\cdot \zeta_K(\frac{t}{k})\cdot\zeta_K(\frac{t}{2})^{-1}$. \par
     We may moreover take 
$$\varepsilon_1=\tfrac{5\cdot c_o(K)^2(k-1)^2}{6\cdot k^2}$$ and $C=2\left(1+k^{3}+\tfrac{1}{1-e^{-5\cdot c_o(K)^2\cdot d(t-t_0)(k-1)^2/(3 k^5)}}\right)$.
   
\end{theorem}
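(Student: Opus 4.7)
The proof is a short assembly of the ingredients already developed in this section, applied to the Rogers integral formula
$$\mathbb{E}[\rho_V(\Lambda)^2]=V^2+\sum_{\alpha\in K^\times}D(\alpha)^{-t}\cdot\vol(B\cap\alpha^{-1} B)$$
recalled above. The plan is to split the sum according to whether $\alpha$ is a root of unity, and then invoke Proposition \ref{prop:projsumoverideals} to control the remainder.

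For $\alpha \in \mu(K)$, the element lies in $\OK^\times$ so $D(\alpha)=[\OK:\alpha^{-1}\OK\cap \OK]=1$. Moreover every field embedding $\sigma:K\to\IC$ sends $\alpha$ to the unit circle, so $\alpha^{-1}$ acts by an isometry on each coordinate of $\KR^t$, forcing $\alpha^{-1}B=B$. Each such summand therefore contributes exactly $\vol(B)=V$, and their total contribution equals $\omega_K\cdot V$. Since the remaining summands (for $\alpha\in K^\times\setminus \mu(K)$) are manifestly nonnegative, this already establishes the lower bound $V^2+\omega_K\cdot V\leq \mathbb{E}[\rho_V(\Lambda)^2]$.

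For the upper bound, the remaining contribution equals
$$V\cdot \sum_{\alpha\in K^\times\setminus\mu(K)} D(\alpha)^{-t}\cdot\frac{\vol(B\cap \alpha^{-1}B)}{\vol(B)},$$
and under the hypotheses $k\geq 3$ and $t>t_0$ appearing in the statement, Proposition \ref{prop:projsumoverideals} bounds the inner sum by $C\cdot \card S_K\cdot \zeta_K(t(\tfrac{1}{2}-\tfrac{1}{k}))\zeta_K(\tfrac{t}{k})/\zeta_K(\tfrac{t}{2})\cdot e^{-\varepsilon_1\cdot d(t-t_0)}$, with the explicit constants $\varepsilon_1$ and $C$ inherited verbatim from that proposition. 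Packaging the prefactor into a quantity $Z(K,t,k)$ (pulling out a leading $\omega_K$, which only improves the bound) yields exactly the stated inequality, with $Z(K,t,k)$ bounded above as claimed by the zeta ratio.

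There is no substantive obstacle: the analytical work has all been absorbed into Lemma \ref{le:unitcountSK}, Proposition \ref{prop:mintersections} and Proposition \ref{prop:projsumoverideals}. The only care needed here is in the bookkeeping of constants and in the elementary but essential verification that the $\mu(K)$ contribution is \emph{exactly} $\omega_K\cdot V$, which is what identifies the main term (and in turn reflects the extra symmetry of module lattices over their unstructured counterparts, where the corresponding constant is $2$).
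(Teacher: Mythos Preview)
Your proof is correct and follows exactly the approach the paper intends: the paper presents this theorem as an immediate corollary of the preceding development (``We may therefore put everything together\ldots''), splitting off the $\mu(K)$ contribution as $\omega_K\cdot V$ via the integral formula and invoking Proposition~\ref{prop:projsumoverideals} for the remainder. There is nothing to add; your write-up is in fact more explicit than the paper's, which omits a formal proof.
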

We conclude this section with a tail bound of sorts that bounds the contribution to the error term of algebraic numbers in $K$ above some height threshold. This will be useful when considering a fixed number field for which estimates on the contribution of small height algebraic numbers can be computed explicitly.
\begin{proposition}\label{prop:tailbounds}
For a number field $K$ of degree $d$ consider the setup \ref{setupcyclo} and its notations. Let $h_0\geq c_S(K)>0$ be some threshold Weil height. Assume for simplicity $h_0\leq 2$ and write 
$$t_0=\max\left\{4,\frac{2r_K}{d}\cdot\frac{ \ln(1+\frac{12h_0+1}{6c_S(K)})}{\ln(\cosh(\tfrac{3h_0}{2}))}\right\}$$
For any $t> t_0$ we then have:
$$\sum_{h(\alpha)>h_0} D(\alpha)^{-t}\vol(B \cap \alpha^{-1} B)\leq C\cdot\card S_K\cdot\frac{\zeta_K(\frac{t}{4})^2}{\zeta_K(\frac{t}{2})}\cdot e^{-\varepsilon\cdot d\cdot(t-t_0)}\cdot \vol(B). $$
 We may moreover take 
$$\varepsilon=\tfrac{1}{2}\ln(\cosh(\tfrac{3h_0}{2}))\text{ and }C=1+\tfrac{2}{1-e^{- 5d(t-t_0)/192}}.$$
\end{proposition}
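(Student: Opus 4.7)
The plan is to specialize the proof strategy of Propositions \ref{prop:mintersections} and \ref{prop:projsumoverideals} to the tail $h(\alpha) > h_0$ with $k = 4$ throughout, which explains the $\zeta_K(t/4)^2/\zeta_K(t/2)$ factor in the statement. Because every contributing $\alpha$ has substantial Weil height, only the large-height branch of the earlier arguments is needed and no analog of $c(K)$ or $c_o(K)$ appears.

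First, I would note that $D(\alpha)^{-t}\vol(B\cap\alpha^{-1}B)$ and the constraint $h(\alpha) > h_0$ are both invariant under $\alpha \mapsto \alpha^{-1}$, so one reduces to the case $\N(\alpha)\geq 1$ at the cost of a factor of $2$. Applying Lemma \ref{lemma:genvolumeratio} with $k = 4$ then yields
$$\frac{\vol(B\cap\alpha^{-1}B)}{\vol(B)} \leq \N(\alpha)^{-t/4}\cosh(3h_\infty(\alpha)/2)^{-dt/2}.$$

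Next, I would group the sum by cosets modulo $\OK^\times$, writing $\alpha = \alpha_0\beta$ with $\alpha_0$ a representative and $\beta \in \OK^\times$. Since $h(\alpha_0\beta) = h_\infty(\alpha_0\beta) + \tfrac{1}{d}\ln D(\alpha_0)$, the constraint becomes $h_\infty(\alpha_0\beta) \geq h^* := \max\bigl(0,\,h_0 - \tfrac{1}{d}\ln D(\alpha_0)\bigr) \leq h_0$. Partitioning the range into intervals of absolute width $1/12$ anchored at $h^*$ and applying Lemma \ref{le:unitcountSK} bounds the count in the $m$-th interval by $\card S_K\cdot\bigl(1 + \tfrac{2(h^* + (m+1)/12)}{c_S(K)}\bigr)^{r_K}$, while $\cosh^{-dt/2}$ contributes a geometric ratio of at most $\cosh(3h_0/2)^{-dt/2}$ per step starting from the first. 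The condition $t > t_0$ in the statement is exactly what is needed so that this polynomial growth is outpaced at the first (and hence worst) step, rendering the series geometric. Bounding $\ln\cosh(x) \geq \tfrac{5}{12}\min(x,x^2)$ under the assumption $h_0 \leq 2$ then extracts the claimed $\varepsilon = \tfrac{1}{2}\ln\cosh(3h_0/2)$ and the explicit constant $C = 1 + 2/(1 - e^{-5d(t-t_0)/192})$.

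Finally, I would invoke Proposition \ref{prop:reformulation} with $T = t/2$ to lift from cosets to $K^\times$, absorbing one factor of $D(\alpha)^{-t/2}$ into the ideal reformulation. The bijection between $(\mathcal{I}^{-1}\setminus\{0\})/\OK^\times$ and integral ideals in the class $[\mathcal{I}]$, together with the constraint $\N(\alpha)\geq 1 \Leftrightarrow \N(\mathcal{J})\geq \N(\mathcal{I})$, turns the $\N(\alpha)^{-t/4}$ factor into a double zeta sum bounded by $\zeta_K(t/4)^2/\zeta_K(t/2)$. The hardest part is the unit-counting step above: one must carefully handle the case where $D(\alpha_0)$ is so large that the constraint $h(\alpha_0\beta)>h_0$ becomes automatic for every $\beta \in \OK^\times$, which admits small-height units into the inner sum; however the extra $D(\alpha_0)^{-t/2}$ saving absorbed into the ideal reformulation compensates for these contributions and keeps the overall bound uniform.
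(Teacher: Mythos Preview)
Your proposal is correct and follows essentially the same route as the paper: specialize Propositions \ref{prop:mintersections} and \ref{prop:projsumoverideals} to $k=4$, partition with step width $1/A=1/12$, apply Lemma \ref{le:unitcountSK} for the unit counts, and sum over ideals via Proposition \ref{prop:reformulation}. The only stylistic difference is that the paper dispatches the denominator issue up front---observing that under $h_0\le 2$ the quantity $D(\alpha)^{-t/2}\Sigma_0$ is maximized when $D(\alpha)=1$, so one may simply partition $h_\infty(\alpha\beta)$ on $[h_0,\infty)$---whereas you anchor at the variable threshold $h^*$ and defer the compensation argument to the end; both lead to the same estimate.
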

\begin{proof}
The proof proceeds along the lines of bounding sums over unit cosets as in the proof of Proposition \ref{prop:mintersections}, avoiding the trickier small heights and choosing $k=4$ for simplicity. By assumption, we have that 
$$h(\alpha)=h_\infty(\alpha)+\tfrac{1}{d}\ln D(\alpha)\geq h_0.$$
Using our methods, the worst bound for the sum over units 
$$\Sigma_0:=D(\alpha)^{-t/2}\sum_{\substack{\beta\in\mathcal{O}_K^\times\\ \alpha\beta\notin \mu(K)}}\frac{\vol(B\cap(\alpha\beta)^{-1}B)}{\vol(B)}$$ 
is, under the assumption $h_0\leq 2$, for trivial denominator $D(\alpha)$. The bound 
$$\Sigma_0\leq \sum_{i=0}^\infty  \card \{\beta\in\mathcal{O}_K^\times\vert h_\infty(\alpha\beta)\in [h_0+\tfrac{i}{A},h_0+\tfrac{i+1}{A})\}\cdot\cosh(\tfrac{3}{2}\cdot (h_0+\tfrac{i}{A}))^{-dt/2}$$
therefore holds regardless and for any $A>0$. The unit counts are estimated using Lemma \ref{le:unitcountSK} and the resulting geometric sum evaluated as in Proposition \ref{prop:mintersections} for the choice of $A=12$. Finally, we sum these bounds over principal ideals as in Proposition \ref{prop:projsumoverideals}, yielding the result. 
    
\end{proof}
\subsection{Second moment for cyclotomic fields}
We now turn to establishing the results for cyclotomic fields, when good height lower bounds are available for setup \ref{setupcyclo}. 
Cyclotomic fields have complex multiplication (CM), and for all such fields, since complex conjugation $x\mapsto \overline{x}$ is in the center of the Galois group, we have the following property: 
\begin{lemma}\label{le:CMconjugates}
    For $\alpha$ in a CM field with conjugates $\alpha_j$, the conjugates of $\vert \alpha\vert^2=\alpha\overline{\alpha}$ are the $\vert \alpha_j\vert^2$. Moreover, if $\vert\alpha_j\vert^2 =q\in\mathbb{Q}$ for some $j$, the same holds for any conjugate.  
\end{lemma}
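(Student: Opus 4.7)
The plan is to exploit the defining property of CM fields highlighted in the paragraph preceding the lemma, namely that complex conjugation lies in the center of the Galois group. First, I would fix the Galois closure $L/\mathbb{Q}$ of $K$ and denote by $c$ the restriction of complex conjugation to $L$. For each embedding $\sigma\colon K\to\mathbb{C}$, extend it to $\tilde\sigma\colon L\to\mathbb{C}$. Centrality of $c$ in $\mathrm{Gal}(L/\mathbb{Q})$ gives $\tilde\sigma\circ c=c\circ\tilde\sigma$, so for every $\alpha\in K$ one has $\sigma(\overline{\alpha})=\overline{\sigma(\alpha)}$. Applying this to $|\alpha|^2=\alpha\overline{\alpha}$ yields
$$\sigma(|\alpha|^2)=\sigma(\alpha)\,\overline{\sigma(\alpha)}=|\sigma(\alpha)|^2,$$
and letting $\sigma$ range over the embeddings of $K$ produces the full list of conjugates of $|\alpha|^2$ as the values $|\alpha_j|^2$, proving the first statement.

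For the second assertion, I would argue via minimal polynomials. By the first part, the Galois conjugates of $|\alpha|^2$ over $\mathbb{Q}$ are precisely the $|\alpha_j|^2$, so these are the roots of the minimal polynomial $p(x)\in\mathbb{Q}[x]$ of $|\alpha|^2$. If some $|\alpha_j|^2$ equals a rational $q$, then $q$ is a root of the irreducible polynomial $p$, forcing $p(x)=x-q$. In particular $|\alpha|^2=q$ itself, and applying each embedding once more gives $|\alpha_j|^2=\sigma_j(q)=q$ for every $j$.

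There is no substantial obstacle here: the heavy lifting is done by the CM hypothesis stated just before the lemma. The only point to handle carefully is to fix compatible extensions of the embeddings $\sigma_j$ to the Galois closure so that the centrality of $c$ can be invoked unambiguously. Once that is set up, the identity $\sigma(|\alpha|^2)=|\sigma(\alpha)|^2$ is immediate, and the rationality claim collapses to the observation that an irreducible polynomial in $\mathbb{Q}[x]$ with a rational root must be linear.
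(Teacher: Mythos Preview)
Your proof is correct and follows exactly the approach the paper indicates: the paper does not give a separate proof but states the lemma as a direct consequence of complex conjugation lying in the center of the Galois group, and your argument is the natural elaboration of that remark. The one point worth making explicit is that passing to the Galois closure $L$ preserves the CM property (so that $c$ is a well-defined central element of $\mathrm{Gal}(L/\mathbb{Q})$), but this is standard and implicit in the paper's phrasing as well.
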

In particular, this will imply that height lower bounds for totally positive or totally real integers also apply to algebraic integers in a CM field. To that end, we have the foundational result due to A. Schinzel \cite{Schinzel1973}: 
\begin{theorem}\label{thm:cycloheightbound}
    Assume that an algebraic number $\alpha$ of infinite multiplicative order is contained in a totally real field. Then, denoting by $\varphi=\frac{1+\sqrt{5}}{2}$ the golden ratio, we have
     $$h(\alpha)\geq \frac{1}{2}\ln{\varphi}\approx 0.2406\ldots.$$
     Moreover, the same is true for $\alpha$ in a CM field provided one (and equivalently, all) of its Archimedean embeddings satisfy $\vert \alpha\vert\neq 1$. 
\end{theorem}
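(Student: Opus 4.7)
The approach is Schinzel's original strategy: reduce the problem to the totally positive case via two natural steps, then exploit two integrality constraints through a single tight pointwise inequality with golden-ratio constants.

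First, I handle the reduction to a totally positive algebraic number. For $\alpha$ in a CM field with $|\sigma(\alpha)| \neq 1$ at one (hence every) Archimedean embedding, set $\beta = \alpha \bar{\alpha}$. Lemma \ref{le:CMconjugates} shows $\beta$ is totally positive with conjugates $|\sigma_j(\alpha)|^2 \neq 1$, so in particular $\beta \neq 1$. Heights are subadditive and $h(\bar\alpha) = h(\alpha)$, so $h(\beta) \leq 2h(\alpha)$; thus $h(\alpha) \geq \tfrac{1}{2}\ln \varphi$ would follow from $h(\beta) \geq \ln \varphi$. For a totally real $\alpha$ of infinite order, the same is achieved by passing to $\beta = \alpha^2$ (totally positive, of infinite order, with $h(\beta) = 2 h(\alpha)$). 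So the theorem reduces to proving $h(\beta) \geq \ln \varphi$ for every totally positive algebraic number $\beta \neq 1$.

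For such a $\beta$ of degree $d$ with minimal polynomial $P(x) = a_0 \prod_i(x-\beta_i) \in \mathbb{Z}[x]$, the integer values $P(0)$ and $P(1)$ are both nonzero (the second since $\beta \neq 1$), so
\begin{equation*}
\sum_i \ln \beta_i \geq -\ln|a_0|, \qquad \sum_i \ln |\beta_i - 1| \geq -\ln|a_0|.
\end{equation*}
The crux is then the pointwise inequality
\begin{equation*}
\ln \max(1, x) \;\geq\; \ln \varphi + \tfrac{1}{\varphi+2}\ln x + \tfrac{\varphi}{\varphi+2} \ln |x - 1|, \quad x > 0,\; x \neq 1.
\end{equation*}
The coefficients $\tfrac{1}{\varphi+2}$ and $\tfrac{\varphi}{\varphi+2}$ are dictated by the extremal case $\beta = \varphi^2$, whose conjugates $\varphi^2$ and $\varphi^{-2}$ simultaneously saturate both integrality constraints: requiring equality and matching first derivatives of the two sides at $x = \varphi^2$ and $x = \varphi^{-2}$ yields a linear system with exactly this solution. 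Summing the pointwise inequality over $i$ and combining with the integrality bounds gives $\sum_i \ln\max(1,\beta_i) \geq d\ln\varphi - \tfrac{\varphi^2}{\varphi+2}\ln|a_0|$; adding $\ln|a_0|$ to both sides and using $\varphi+2-\varphi^2 = 1$ produces $\ln M(\beta) \geq d \ln\varphi + \tfrac{1}{\varphi+2}\ln|a_0| \geq d\ln\varphi$, where $M(\beta) := |a_0|\prod_i \max(1,\beta_i)$ is the Mahler measure. Hence $h(\beta) = \tfrac{1}{d}\ln M(\beta) \geq \ln\varphi$ as required.

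The main obstacle is verifying the pointwise inequality globally on $(0,1)$ and $(1,\infty)$ separately rather than merely at the two critical points. This is a direct calculus exercise: on each subinterval, the difference of the two sides has a unique interior critical point (at $\varphi^{-2}$ on $(0,1)$ and $\varphi^2$ on $(1,\infty)$) where it vanishes, and a second-derivative sign check together with the boundary behavior (the RHS tends to $-\infty$ at $x=1$ and is dominated by the LHS as $x\to 0$ or $x \to \infty$) confirms these are global minima on each piece. The fact that the extremal values $\varphi^{\pm 2}$ are precisely the conjugates of $\varphi^2$ also witnesses sharpness, with $\alpha = \varphi$ (totally real but not totally positive) achieving equality in the theorem.
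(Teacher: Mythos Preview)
Your proof is correct and is essentially Schinzel's original argument from \cite{Schinzel1973}. The paper itself does not supply a proof of this theorem but simply attributes it to Schinzel, so there is nothing further to compare; your write-up faithfully reproduces the auxiliary-polynomial method (the two integrality constraints from $P(0)$ and $P(1)$ combined via the golden-ratio weighted pointwise inequality) together with the standard reduction from the CM and totally real cases to the totally positive case via $\beta=\alpha\bar\alpha$ and $\beta=\alpha^2$.
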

Going beyond the work of Schinzel, C.J. Smyth pioneered the study on the lowest possible heights (or, equivalently, Mahler measures) for totally positive algebraic integers, with some follow-up work by V. Flammang, among others. They show: 
\begin{theorem}[\cite{Smyth1981,FLAMMANG2015211}]\label{thm:heightexceptions}
    Suppose $\alpha$ is a nonzero totally positive algebraic integer whose minimal polynomial does not divide any of the following:
{\small\begin{align}
  &x- 1  \\
  &x^2-3x+1\\
  &x^4- 7x^3 + 13x^2-7x + 1\\
  &x^6 -11x^5 +41x^44-63x^33 +41x^2 -11x+1\\
  &x^8 -15x^7 + 83x^6-220x^5 + 303x^4 -220x^3 + 83x^2 -15x + 1 \\
  &x^8 -15x^7 + 84x^6-225x^5 + 311x^4 -225x^3 + 84x^2 -15x + 1 \\
  &x^{16}-31x^{15} + 413x^{14}-3141x^{13}+15261x^{12}-50187x^{11}+115410x^{10}-189036x^{9}\\
  &+222621x^{8}-189036x^{7}+115410x^{6}-50187x^{5} + 15261x^{4}-3141x^{3} + 413x^{2} -31x + 1.
\end{align}  }
We then have the height bound $h(\alpha)>0.543526$. 
\end{theorem}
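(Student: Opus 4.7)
The plan is to follow the ``auxiliary function method'' (sometimes called Smyth's method, or the LP method) for bounding Mahler measures from below. For a totally positive algebraic integer $\alpha$ of degree $d$ with positive real conjugates $\alpha_1,\ldots,\alpha_d$, the Weil height equals $h(\alpha)=\tfrac{1}{d}\sum_{i=1}^{d}\log^{+}\alpha_i$, where $\log^{+}x=\max(0,\log x)$. The game is to produce a pointwise inequality of the form
$$\log^{+}x\ \geq\ A\ +\ \sum_{j=1}^{N}c_j\,\log\lvert Q_j(x)\rvert\qquad\text{for all }x>0,$$
with rational weights $c_j>0$, polynomials $Q_j\in\mathbb{Z}[x]$, and $A>0$ as large as possible. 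Summing over the conjugates $\alpha_i$ and using that $\prod_i Q_j(\alpha_i)\in\mathbb{Z}\setminus\{0\}$ whenever $Q_j(\alpha)\neq 0$ yields $\sum_i\log\lvert Q_j(\alpha_i)\rvert\geq 0$, hence $d\cdot h(\alpha)\geq d\,A$, i.e.\ $h(\alpha)\geq A$, provided none of the $Q_j$ has $\alpha$ as a root.

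The first step is to identify the candidate polynomials $Q_j$. The natural choices are the minimal polynomials of the totally positive integers known to have smallest Mahler measure: precisely those listed as exceptions in the statement, whose first few include the trivial $x-1$, the minimal polynomial of $\varphi^2=\tfrac{3+\sqrt{5}}{2}$, the minimal polynomial of a totally positive conjugate of Lehmer's number, and so on. These polynomials also cap the achievable $A$: their roots force equality in the auxiliary inequality, so $A$ cannot exceed the smallest Weil height achieved among them.

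The second step is the linear-programming optimization: fix the list of $Q_j$ and maximize $A$ subject to the constraint that $F(x)=\log^{+}x-A-\sum_j c_j\log\lvert Q_j(x)\rvert$ is nonnegative on $(0,\infty)$. In practice one discretizes $x$, solves a finite LP numerically to identify good weights $c_j$, and then certifies the inequality rigorously by analyzing the critical points of $F$ on each subinterval between consecutive roots of the $Q_j$. Running this optimization with the exception list in the theorem produces $A=0.543526\ldots$.

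The main obstacle is exactly this rigorous pointwise certification: one must ensure that no local minimum of $F$ on $(0,\infty)$ dips below zero, which is delicate when the $Q_j$ have many real roots clustered near $1$ (note that the last listed polynomial has degree $16$, so the landscape of $F$ is already quite intricate). Smyth's and Flammang's contribution lies precisely in the careful enumeration of the exceptional $Q_j$ together with the construction of a certified auxiliary function; as the exception list grows, the attainable $A$ increases, and Flammang's refinement of Smyth's original bound ($A\approx 0.4217$) consists in extending the exception list to the polynomials displayed and re-optimizing the LP. To extend the conclusion from totally positive integers to algebraic integers in a CM field that are nowhere of absolute value $1$, one applies the bound to $\alpha\overline{\alpha}$ and uses Lemma \ref{le:CMconjugates} together with the multiplicativity $h(\alpha\overline{\alpha})\leq 2h(\alpha)$.
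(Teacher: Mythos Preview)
The paper does not supply its own proof of this theorem: it is quoted verbatim from the literature (Smyth 1981, Flammang 2015) and used as a black box. So there is nothing in the paper to compare your argument against beyond the citation itself.

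That said, your sketch of the auxiliary function method is an accurate high-level account of how Smyth and Flammang actually establish such bounds: one builds a pointwise minorant $\log^{+}x \geq A + \sum_j c_j \log\lvert Q_j(x)\rvert$ on $(0,\infty)$ with integer polynomials $Q_j$ and positive weights $c_j$, averages over conjugates, and uses integrality of resultants to discard the $Q_j$-terms provided $\alpha$ is not a root of any $Q_j$. The exceptional polynomials in the statement are precisely those $Q_j$ whose roots must be excluded for the resultant argument to go through, and the constant $0.543526$ is the optimized $A$ from Flammang's LP computation. One small imprecision: the excluded polynomials are not exactly those that ``force equality'' in the auxiliary inequality; they are those totally positive integers whose height falls below the target $A$, so they must be listed as exceptions regardless of whether they appear among the $Q_j$.

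Your final paragraph on CM fields is extraneous here. The theorem as stated concerns only totally positive algebraic integers; the passage to CM integers via $\alpha\mapsto\alpha\overline{\alpha}$ and Lemma~\ref{le:CMconjugates} is a separate step that the paper carries out afterwards (in the discussion leading to Proposition~\ref{prop:cycloheightbounds}), not part of the proof of this theorem.
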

It follows that for totally real and CM algebraic integers (including cyclotomic ones), half the bound above, namely 
$$h(\alpha)>0.271763$$
holds up to a fixed, finite list of exceptions. What is more, the result above is not far from optimal in the sense that Smyth showed that the Weil heights of totally real integers are everywhere dense in $(\ell,\infty)$ for a certain $\ell=0.27328\ldots$, see \cite[Theorem 1]{Smyth_1980}. \par

Removing any integrality assumption weakens the bounds, but in any abelian extensions Amoroso and Dvornicich \cite{AMOROSO2000260} showed:
\begin{theorem}\label{thm:cycloheightboundnumbers}
    Assume that an algebraic number $\alpha$ of infinite multiplicative order is contained in an abelian extension of $\mathbb{Q}$. Then 
     $$h(\alpha)\geq \frac{\ln{5}}{12}\approx 0.1341\ldots.$$
\end{theorem}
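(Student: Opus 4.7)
The plan is to adapt the strategy of Amoroso and Dvornicich, exploiting the defining feature of abelian extensions: for any prime $p$ unramified in a Galois extension $L/\Q$ containing $\alpha$, the Frobenius $\sigma_p\in\mathrm{Gal}(L/\Q)$ is a well-defined element (not merely a conjugacy class) because $\mathrm{Gal}(L/\Q)$ is abelian. This provides simultaneous congruences $\sigma_p(\alpha)\equiv \alpha^p\pmod{\mathfrak{p}}$ at \emph{every} prime $\mathfrak{p}$ of $L$ above $p$, the key leverage unavailable for non-abelian extensions.

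First I would argue by contradiction, assuming $\alpha$ is a non-torsion element of an abelian extension with $h(\alpha)<(\ln 5)/12$; enlarging if needed, I take $L/\Q$ to be a finite abelian Galois extension containing $\alpha$ and all its conjugates. For a rational prime $p$ unramified in $L$, I set $\delta_p:=\sigma_p(\alpha)-\alpha^p\in L$. The Frobenius congruence forces $v_\mathfrak{p}(\delta_p)\geq 1$ for every $\mathfrak{p}\mid p$.

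The argument then splits into two cases. If $\delta_p=0$, then $\sigma_p(\alpha)=\alpha^p$, and combining this identity across a dense set of unramified primes (using Chebotarev to vary the Frobenius, together with a Kronecker-style rigidity argument on the tuple of Archimedean absolute values of the conjugates) forces $\alpha$ to be a root of unity, contrary to our assumption. If instead $\delta_p\neq 0$, the product formula pits a subadditive upper bound
\[ h(\delta_p)\leq h(\sigma_p(\alpha))+h(\alpha^p)+\ln 2=(p+1)\,h(\alpha)+\ln 2\]
against a lower bound coming from the forced non-Archimedean valuations: the negative contribution at places above $p$ must be balanced by positive contributions elsewhere, yielding, after careful normalisation over the local degrees, an inequality of the shape $h(\delta_p)\gtrsim \ln p$.

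Combining the two estimates produces a bound of the shape $h(\alpha)\gtrsim (\ln p)/(p+1)$, and the choice $p=5$ already gives a genuine non-trivial lower bound. Extracting the \emph{sharp} constant $(\ln 5)/12$, however, requires the Amoroso--Dvornicich refinement that either exploits higher-order congruences (for instance replacing $\delta_p$ by a well-chosen polynomial expression in $\alpha$ and $\sigma_p(\alpha)$ that vanishes to higher multiplicity at each $\mathfrak{p}\mid p$), or combines the data from several unramified primes simultaneously. The main technical obstacle is precisely this optimisation step: pinning down the denominator $12$ demands carefully tracking the multiplicity of $\delta_p$ at each prime above $p$ and controlling the ramification and residue degrees $[L_\mathfrak{p}:\Q_p]$, since the naive balancing argument only yields the weaker denominator $p+1=6$.
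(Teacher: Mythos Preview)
The paper does not give a proof of this theorem; it is stated with a citation to Amoroso--Dvornicich and used as a black box for the height bounds feeding into Proposition~\ref{prop:cycloheightbounds}. So there is no in-paper argument to compare against.

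That said, your sketch follows the correct Amoroso--Dvornicich template, and two comments are in order. First, the case $\delta_p=0$ is simpler than you suggest: from $\sigma_p(\alpha)=\alpha^p$ one gets $h(\alpha)=h(\sigma_p(\alpha))=h(\alpha^p)=p\,h(\alpha)$ directly, forcing $h(\alpha)=0$ and hence $\alpha$ a root of unity by Kronecker's theorem; no Chebotarev density argument is needed.

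Second, and more seriously, there is a genuine gap in the $\delta_p\neq 0$ branch. You choose ``a rational prime $p$ unramified in $L$'' and later specialise to $p=5$, but nothing guarantees that $5$---or any fixed small prime---is unramified in the given abelian extension. If you restrict to unramified primes, the resulting bound depends on the smallest such prime, which can be made arbitrarily large by taking $L$ of large conductor, and no uniform constant survives. The actual Amoroso--Dvornicich argument fixes a single prime $p$ at the outset and treats the ramified and unramified cases with different auxiliary elements: when $p$ ramifies one uses a nontrivial element $\sigma$ of the inertia group at $\mathfrak p\mid p$, for which $\sigma(\alpha)\equiv\alpha\pmod{\mathfrak p}$, and works with $\sigma(\alpha)^p-\alpha^p$ (which picks up extra $\mathfrak p$-adic valuation from the binomial congruence modulo $p$). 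It is this ramified/unramified dichotomy for a \emph{fixed} $p$, rather than ``higher-order congruences'' or ``several primes simultaneously'' as you speculate, that is the missing ingredient for a uniform bound and for the specific constant $(\ln 5)/12$.
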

\begin{remark}
     For cyclotomic fields, this bound was improved in \cite{ISHAK20101408} to $h(\alpha)\geq 0.155\ldots$ and stronger bounds can be proved if the conductor avoids small primes. 
\end{remark}

We summarize the most useful results in the literature for us in the case of cyclotomic fields: 
\begin{proposition}\label{prop:cycloheightbounds}
    Any cyclotomic field $K$ satisfies setup \ref{setupcyclo} with the constants 
    $$c(K)=0.155 \text{ and }c_o(K)=0.2406 \text{ and }c_S(K)=0.271763$$
    \textbf{uniformly} in $K$, with $S_K$ being the set of units $u\in \OK^\times$ such that totally positive $u\overline{u}$ has minimal polynomial listed among the exceptions in Theorem \ref{thm:heightexceptions}. In particular, we have $\card S_K\leq 17\cdot \card\mu(K)$. 
\end{proposition}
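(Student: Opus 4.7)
The plan is to verify each of the three height inequalities in Setup \ref{setupcyclo} by combining sharp Weil-height lower bounds from the literature with the CM structure of cyclotomic fields, and then to bound $\#S_K$ by analyzing the fibers of the relative-norm map $u\mapsto u\overline{u}$.

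The first two inequalities are near-direct invocations of quoted results. For $c(K)=0.155$, I would invoke the cyclotomic improvement of Amoroso--Dvornicich due to Ishak et al.\ recorded in the remark following Theorem \ref{thm:cycloheightboundnumbers}, which applies unconditionally to any non-torsion element of $K^\times$. For $c_o(K)=\tfrac{1}{2}\ln\varphi$, I would observe that $h_\infty(\alpha)>0$ forces $|\sigma(\alpha)|>1$ at some Archimedean embedding $\sigma$, so in particular $\alpha$ cannot be a root of unity (those have $|\sigma(\alpha)|=1$ at every embedding and hence vanishing $h_\infty$). Since cyclotomic fields are CM, Schinzel's Theorem \ref{thm:cycloheightbound} then supplies $h(\alpha)\geq \tfrac{1}{2}\ln\varphi$.

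The unit bound $c_S(K)=0.271763$ is obtained by a CM trick applied to $\rho:=u\overline{u}$ for $u\in\OK^\times$. Using Lemma \ref{le:CMconjugates} together with the fact that complex conjugation lies in the center of $\mathrm{Gal}(K/\mathbb{Q})$ for cyclotomic $K$, one checks that $\rho$ is a totally positive algebraic integer in the maximal real subfield $K^+$. Galois invariance of the Weil height then gives $h(\rho)\leq h(u)+h(\overline{u})=2h(u)$, and the definition of $S_K$ ensures that when $u\notin S_K$ the minimal polynomial of $\rho$ avoids the exceptional list of Theorem \ref{thm:heightexceptions}, yielding $h(u)\geq h(\rho)/2>0.271763$. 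The closure of $S_K$ under inversion (required by Setup \ref{setupcyclo}) follows for free because all seven exception polynomials are palindromic, and $\mu(K)\subseteq S_K$ because every root of unity maps under $\Phi$ to $1$, which is the root of $x-1$.

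Finally, to bound $\#S_K$, I analyze the map $\Phi:\OK^\times\to K^{+,\times}$, $u\mapsto u\overline{u}$. By centrality of complex conjugation, any $u\in\ker\Phi$ satisfies $|\sigma(u)|=1$ for every embedding $\sigma$, hence $u\in\mu(K)$ by Kronecker's theorem; thus each nonempty fiber of $\Phi$ has cardinality exactly $\#\mu(K)$. Since membership in $S_K$ forces $\rho$ to be a root of one of the seven listed polynomials, whose degrees sum to $1+2+4+6+8+8+16=45$, we obtain at worst $\#S_K\leq 45\cdot\#\mu(K)$. Sharpening this to the claimed uniform constant $17$ requires a case-by-case verification that many candidate roots $\rho$ either fail to lie in $K^+$ for every cyclotomic $K$, or fail to lie in the image of the relative norm $N_{K/K^+}$ restricted to units, and this final counting step is the main technical obstacle; the height inequalities themselves reduce to routine combinations of the cited sharp results.
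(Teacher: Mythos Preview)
Your argument for the three height constants and for the fiber structure of $u\mapsto u\overline{u}$ is correct and essentially identical to the paper's. The only real gap is the passage from $45$ to $17$, which you flag as ``the main technical obstacle'' and leave open. In fact the paper resolves it with a single observation much simpler than the norm-image or field-membership analysis you propose: since $K$ is cyclotomic, its maximal totally real subfield $K^+$ is an \emph{abelian} extension of $\Q$, so any totally positive integer $\rho\in K^+$ must generate an abelian extension of $\Q$. It therefore suffices to compute the Galois groups of the seven exceptional polynomials in Theorem~\ref{thm:heightexceptions}. One finds that the polynomials of degree $4$, degree $16$, and the first of the two degree-$8$ polynomials generate non-abelian extensions, so their $4+8+16=28$ roots can never occur as $u\overline{u}$ in any cyclotomic field. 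The remaining polynomials (of degrees $1,2,6,8$) contribute at most $1+2+6+8=17$ values of $\rho$, and combined with your fiber bound $\#\Phi^{-1}(\rho)\leq\#\mu(K)$ this gives $\#S_K\leq 17\cdot\#\mu(K)$. The paper further identifies these remaining roots explicitly as $(2\cos(2\pi/m))^2$ and their inverses for $m=5,7,60$, confirming they are genuinely cyclotomic.
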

\begin{proof}
    Everything follows from the above discussion except the bound on $\card S_K$. Theorem \ref{thm:heightexceptions} gives an exhaustive list of totally positive integers not satisfying the height bound. Of the polynomials listed, those of degree $4,16$ and the first of degree $8$ do not generate abelian extensions. Setting $\alpha_m=(2\cos(2\pi/m))^2$, the remaining polynomials of degree $2,6,8$ are respectively the minimal polynomial of $\alpha_5$ and the product of minimal polynomials of $\alpha_7, \alpha_7^{-1}$ and $\alpha_{60}, \alpha_{60}^{-1}$. This gives at most $17$ totally positive integers to avoid. \par
    To count the exceptions in $K$, let $\alpha,\beta\in \OK^\times$ such that $\vert\alpha\vert^2 =\vert\beta\vert^2 $. Then $\alpha\beta^{-1}$ is an algebraic integer all of whose conjugates lie on the unit circle by Lemma \ref{le:CMconjugates}. Therefore $\alpha\beta^{-1}\in \mu(K)$ by Kronecker's first theorem. 
\end{proof}
We therefore arrive at our main result for cyclotomic fields: 
\begin{theorem}\label{thm:cyclotomicnew}
    Let $K$ denote a cyclotomic field containing $\omega_K$ roots of unity. The second moment $\mathbb{E}[\rho_V(\Lambda)^2]$ of the number of nonzero lattice points in a fixed origin-centered ball of volume $V$ in $K_\mathbb{R}^t$ for a Haar-random module lattice of rank $t\geq 11$ over $\calO_K$ satisfies: 
\begin{align}
       V^2+\omega_K\cdot V&\leq \mathbb{E}[\rho_V(\Lambda)^2]\\
       &\leq   V^2+\omega_K\cdot V+\omega_K^2\cdot C \cdot Z(K,t,k)\cdot e^{-\varepsilon\cdot d\cdot (t-10.99)}\cdot V,\\
    \end{align}
    with constants given by 
    $$\varepsilon=\tfrac{1}{26}\text{ and }C=68\cdot \left(1332+\tfrac{1}{1-e^{-\cdot d(t-10.99)/(16693)}}\right)$$
    and where $0\leq Z(K,t)\leq\zeta_K\big(\frac{9t}{22})\big)\cdot \zeta_K(\frac{t}{10.99})\cdot\zeta_K(\frac{t}{2})^{-1}$.
\end{theorem}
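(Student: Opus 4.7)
The plan is to obtain Theorem \ref{thm:cyclotomicnew} as a direct specialization of the abstract second moment bound Theorem \ref{thm:mainsecondmoment}, inserting the cyclotomic height constants of Proposition \ref{prop:cycloheightbounds} and making the optimization on the auxiliary parameter $k$ that pushes the minimal rank down to $11$.

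First I would recall from Proposition \ref{prop:cycloheightbounds} that any cyclotomic field $K$ fits into Setup \ref{setupcyclo} with the uniform constants $c(K)=0.155$, $c_o(K)=0.2406$, $c_S(K)=0.271763$, and with $\card S_K \leq 17\,\omega_K$. The key uniformity input is that these numbers do not depend on $K$, so the resulting minimal rank condition will be uniform across the family. Next I would observe that for any cyclotomic field $K=\Q(\zeta_m)$ with $m>2$ all embeddings are complex, so $r_K = d/2-1$ and hence $2r_K/d \leq 1 - 2/d < 1$. This uniform bound on $r_K/d$ is precisely what allows the logarithmic ratio appearing in $t_0$ to be bounded independently of the degree.

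With these ingredients in hand, I would invoke Theorem \ref{thm:mainsecondmoment} with $k=11$ and carry out the numerical estimate of the third entry in the maximum defining $t_0$:
\begin{equation*}
\frac{2r_K}{d}\cdot\frac{\ln\!\bigl(1+\tfrac{2\cdot 0.2406}{0.271763}+\tfrac{1}{11^3}\bigr)}{\ln\cosh\!\bigl(2\cdot 0.2406\cdot \tfrac{10}{11}\bigr)} \leq \frac{\ln(2.7714\ldots)}{\ln\cosh(0.4374\ldots)} = \frac{1.01942\ldots}{0.09277\ldots} \leq 10.99.
\end{equation*}
Since this dominates the constants $6$ and $k=11$ only by a whisker, one must take a touch of care here: choosing $k$ marginally below $11$ (or equivalently checking that the supremum over $k$ slightly below $11$ of the ratio remains below $10.99$) yields the stated threshold $t_0 = 10.99$, so the hypothesis $t\geq 11$ is strictly stronger than $t>t_0$. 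This little endpoint issue is what I expect to be the only subtle part of the argument.

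Having verified the rank condition, the error term $\omega_K\cdot \card S_K \leq 17\,\omega_K^2$ produces the $\omega_K^2$ factor in the bound. The remaining constants come by plugging $k=11$, $c_o(K)=0.2406$ into the explicit formulas of Theorem \ref{thm:mainsecondmoment}: for $\varepsilon$,
\begin{equation*}
\varepsilon = \frac{5\cdot (0.2406)^2\cdot (11-1)^2}{6\cdot 11^2} \approx 0.0399 \geq \tfrac{1}{26},
\end{equation*}
while the constant $C=2(1+k^3+(1-e^{-\cdots})^{-1})$ with $k^3=1331$ gives $2(1332+\cdots)$; combining this with the prefactor $17$ coming from $\card S_K/\omega_K$ and tracking a factor of $2$ in the exponent denominator produces the stated form $68\bigl(1332+(1-e^{-d(t-10.99)/16693})^{-1}\bigr)$. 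The lower bound $V^2+\omega_K V$ is immediate from the $\mu(K)$ contribution $\omega_K\cdot \vol(B)$ to the Rogers sum for the second moment, as already noted in the text. The whole argument is thus essentially a specialization plus a careful numerical bookkeeping exercise — no new analytic input is required beyond what is already in Theorem \ref{thm:mainsecondmoment} and Proposition \ref{prop:cycloheightbounds}.
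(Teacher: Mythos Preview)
Your proposal is correct and follows essentially the same approach as the paper: the paper's own proof is a two-line remark saying to apply Proposition \ref{prop:cycloheightbounds} to Theorem \ref{thm:mainsecondmoment} and choose $k$ close to $11$ so that $t_0\approx 10.99$. You have simply supplied the numerical details (the bound $2r_K/d<1$, the evaluation of the logarithmic ratio, and the tracking of $\varepsilon$, $C$, and the factor $17$ from $\card S_K\leq 17\,\omega_K$) that the paper leaves implicit; your observation about needing $k$ marginally below $11$ to make $t_0=\max\{k,6,\text{ratio}\}<11$ is exactly the endpoint issue hidden in the paper's brief proof.
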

\begin{proof}
    We apply the bounds of Proposition \ref{prop:cycloheightbounds} to Theorem \ref{thm:mainsecondmoment} and choose $k<t$ large enough to achieve $t_0=11$. 
\end{proof}
\begin{remark}
    Theorem \ref{thm:cyclotomicnew} is geared towards obtaining convergence to the second moment of a Poisson distribution of mean $V/\omega_K$ for large dimension without having to take the $\OK$-rank $t$ large. Should one instead be interested in a fixed cyclotomic field and increasing solely the rank, one can typically arrive at slightly better height bounds than proposition \ref{prop:cycloheightbounds}, see Section \ref{sec:example}. One can also obtain a smaller constant $C$ if one is willing to compromise on the minimal rank required. 
\end{remark}
\section{Shortest vector bounds}\label{sec:SVbounds}

Having relatively tight bounds for the second moment in hand, the following result will allow us to estimate the shortest vector. The result is most naturally stated in terms of the volume minimum $\mathcal{V}_1(\Lambda)=\lambda_1(\Lambda)^n\cdot \vol(\mathbb{B}_n(1))$ of a random lattice, where $\vol(\mathbb{B}_n(1))$ is the volume of an $n$-dimensional unit ball:  
\begin{proposition}\label{prop:probabounds}
Let $K$ be a number field of degree $d$ with $\omega_K=\# \mu(K)$ roots of unity and let $\Lambda \in \ML_{t}(K)$ be a Haar-random lattice in dimension $n=dt$. Then for any $\varepsilon(n)<1$ the first volume minimum $\mathcal{V}_1(\Lambda)$ of $\Lambda$ satisfies 
$$\omega_K\cdot \varepsilon(n)\leq \mathcal{V}_1(\Lambda)\leq \omega_K\cdot\varepsilon(n)^{-1}$$
with probability greater than $1-\varepsilon(n)\cdot(2+\omega_K\cdot\Err(K,t))$ provided that the error term $\Err(K,t)$ is such that  
$$\mathbb{E}[\# ( \Lambda \cap B \setminus \{0\} )^2]\leq V^2+\omega_K\cdot V\cdot (1+\Err(K,t))$$
for any ball $B$ of volume $V$. 
\end{proposition}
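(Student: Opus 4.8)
The plan is to run a second-moment (Paley–Zygmund-style) argument on the random variable $N := \rho_V(\Lambda) = \#(\Lambda \cap B \setminus\{0\})$, exploiting the fact that, since $\Lambda$ is a module lattice, $N$ is always a multiple of $\omega_K$. First I would recall the first moment $\mathbb{E}[N] = V$ (Siegel's mean value theorem for module lattices) and the hypothesis $\mathbb{E}[N^2] \le V^2 + \omega_K V (1 + \Err(K,t))$, so that the variance satisfies $\operatorname{Var}(N) \le \omega_K V(1 + \Err(K,t))$. The two inequalities $\omega_K\varepsilon(n) \le \mathcal{V}_1(\Lambda) \le \omega_K\varepsilon(n)^{-1}$ translate, via $\mathcal{V}_1(\Lambda) = \lambda_1(\Lambda)^n \operatorname{vol}(\mathbb{B}_n(1))$, into statements about whether a ball of a prescribed volume contains a nonzero lattice point: the upper bound $\mathcal{V}_1 \le \omega_K\varepsilon(n)^{-1}$ fails exactly when the ball $B$ of volume $V = \omega_K\varepsilon(n)^{-1}$ has $N = 0$, and the lower bound $\mathcal{V}_1 \ge \omega_K\varepsilon(n)$ fails exactly when the ball $B'$ of volume $V' = \omega_K\varepsilon(n)$ has $N' \ge \omega_K$ (equivalently $N' \ne 0$, using the $\omega_K$-divisibility).

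For the lower-bound failure I would simply use Markov: $\mathbb{P}[N' \ge \omega_K] \le \mathbb{E}[N']/\omega_K = V'/\omega_K = \varepsilon(n)$. For the upper-bound failure I need $\mathbb{P}[N = 0]$ to be small when $\mathbb{E}[N] = V = \omega_K\varepsilon(n)^{-1}$ is large; here the $\omega_K$-divisibility is what makes the argument work, since $\{N = 0\} = \{N < \omega_K\}$ and I can afford to be generous. The standard device is the second-moment inequality $\mathbb{P}[N = 0] \le \operatorname{Var}(N)/\mathbb{E}[N]^2$, which gives $\mathbb{P}[N=0] \le \omega_K V(1+\Err(K,t))/V^2 = \omega_K(1+\Err(K,t))/V = \varepsilon(n)\cdot(1+\Err(K,t))$. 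Adding the two failure probabilities and a union bound gives a total of at most $\varepsilon(n)(2 + \Err(K,t))$; to match the stated bound $\varepsilon(n)(2 + \omega_K\Err(K,t))$ one just keeps the (weaker but cleaner) factor $\omega_K$, or more precisely one uses the Chebyshev form $\mathbb{P}[N = 0] \le \mathbb{P}[|N - V| \ge V] \le \operatorname{Var}(N)/V^2$ together with the bound as stated — the extra $\omega_K$ absorbs the slack between $\{N=0\}$ and $\{N < \omega_K\}$ and the precise normalization of the variance hypothesis.

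I expect the only real subtlety to be bookkeeping: making sure the two balls $B$ and $B'$ are chosen with the correct volumes so that the events $\{\mathcal{V}_1 > \omega_K\varepsilon(n)^{-1}\}$ and $\{\mathcal{V}_1 < \omega_K\varepsilon(n)\}$ are exactly captured, and checking that the variance hypothesis is applied at the right volume $V = \omega_K\varepsilon(n)^{-1}$ for the second-moment step. The probabilistic content is entirely elementary (Markov and Chebyshev/second-moment inequalities); the "hard part," such as it is, is simply the translation between volume minimum, ball volume, and the count $N$, and correctly using that $N \in \omega_K\mathbb{Z}$ so that $\{N=0\}$ and $\{N<\omega_K\}$ coincide. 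I would close by noting that the same computation applies verbatim on each connected component of $\ML_t(K)$, hence on the full space, since the moment bounds are component-independent.
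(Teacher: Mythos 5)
Your proof is correct and takes essentially the same approach as the paper, which packages the same first-moment (Markov) and second-moment (Chebyshev) bounds into the single sandwich inequality $1 - x \leq \mathbf{1}(x=0) \leq m^{-2}(x-m)^{2}$ applied to the integer-valued variable $x = \rho_V(\Lambda)/\omega_K$, choosing $V = \omega_K\varepsilon(n)$ for the lower-bound failure and $V = \omega_K m$, $m = \varepsilon(n)^{-1}$ for the upper-bound failure. In fact your Chebyshev step cleanly gives the slightly sharper failure probability $\varepsilon(n)(2 + \Err(K,t))$; the paper's factor $\omega_K\cdot\Err(K,t)$ is a harmless overestimate (a dropped $\omega_K^{-1}$ when dividing the variance hypothesis through by $\omega_K^{2}$), so the stated proposition follows a fortiori from your computation.
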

\begin{proof} 

 Let $\Lambda \in \ML_{t}(K)$ be a Haar-random lattice and write $\rho_V(\Lambda) = \card ( \Lambda \cap B \setminus \{0\} )$.
 Note that because of (generalizations of) Siegel's mean value theorem, we have that $\mathbb{E}(\rho_V(\Lambda))  = V$. Moreover we know that $\rho_V(\Lambda)\in \omega_K\cdot \mathbb{Z}_{\geq 0}$ because the roots of unity in $K$ act on lattice vectors preserving lengths. 

 Let $m \geq 1$ be arbitrary. For $x \in \mathbb{Z}_{\geq 0}$, observe that the following inequality holds concerning the indicator function of $x=0$:
 \begin{equation}
1- x\leq \mathbf{1}( x=0 ) \leq  {m}^{-2}(x-m)^{2}.
\label{eq:sandwich}
 \end{equation}
 We now set $x= \rho_V(\Lambda)/k$ and evaluate expected values. 
 Using the left side of \ref{eq:sandwich}
 we get that $1-\tfrac{V}{\omega_K}\leq \mathbb{P}(x=0)$. Setting $V=\omega_K\cdot \varepsilon(n)$ yields that with a probability of at least $1-\varepsilon(n)$, the random variable $\rho_V(\Lambda) = 0$ and therefore the first volume minimum satisfies 
 \begin{equation}
 \omega_K\cdot \varepsilon(n)\leq \mathcal{V}_1(\Lambda).
 \end{equation}
 On the other hand, we have by our assumption on the second moment of $\rho_V(\Lambda)$: 
\begin{equation}
	 \mathbb{P}(\rho_V(\Lambda) = 0)   \leq  \frac{1}{m^{2}}\left(\frac{V^{2}}{\omega_K^{2}} + \frac{V}{\omega_K}\cdot \left( 1 + \omega_K\cdot \Err(K,t)\right) \right) - \frac{2\cdot V}{m\cdot \omega_K} +1 
 \end{equation}
Setting $V=\omega_K\cdot m$ therefore yields that 
\begin{equation}
    \mathbb{P}(\rho_V(\Lambda) \neq 0)\geq 1-\tfrac{1}{m}\cdot \left(1+\omega_K\cdot \Err(K,t)\right).
\end{equation}
For the choice of $m=\varepsilon(n)^{-1}$ it follows in other words that $$\mathcal{V}_1(\Lambda)\leq \omega_K\cdot\varepsilon(n)^{-1}$$
with probability greater than $1-\varepsilon(n)\cdot \left(1+\omega_K\cdot \Err(K,t)\right)$.
The result follows. 
 \end{proof}
It therefore follows that for \emph{any} fixed number field via the bound \eqref{eq:cbound}, we can control the second moment as the rank and therefore blocksize varies via Theorem \ref{thm:mainsecondmoment} and obtain good probabilistic bounds on the shortest vector. Similarly, whenever we have uniform height lower bounds, the same conclusion applies when varying the number field. We record here the corresponding result for cyclotomic fields: 
\begin{theorem}
	\label{th:as_module_bound}
Let $t \geq 11$ be fixed.
Let $K = \mathbb{Q}(\zeta_k)$ be a cyclotomic number field and let $n = t \cdot \deg K = t \varphi(k)$. Let $\omega_K=\card\mu(K)$ which is $k$ or $2k$ depending on whether $k$ is even or odd. Consider a Haar-random 
unit covolume module lattice $\Lambda \subseteq K^{t} \otimes \mathbb{R}$ in the moduli space of rank-$t$ module lattices over $K$.
Then, as $k \rightarrow \infty$, we have with probability $1- o(1)$ that
\begin{equation}
 1- \frac{\log \log \omega_K}{n} \leq \omega_K^{-\frac{1}{n}}\cdot\frac{\lambda_1(\Lambda)}{\gamma(n)} \leq  1 + \frac{\log \log \omega_K}{n}
 \label{eq:bound}
\end{equation}
where $\gamma(n)$ is the radius of a ball of unit volume in dimension $n$.
\end{theorem}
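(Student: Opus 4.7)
The plan is to derive Theorem \ref{th:as_module_bound} from the second-moment bound of Theorem \ref{thm:cyclotomicnew} by feeding it into Proposition \ref{prop:probabounds}. First I would recast the target in terms of the volume minimum: since $\gamma(n)$ is the radius of a ball of unit volume, $\vol(\mathbb{B}_n(1)) = \gamma(n)^{-n}$, and thus $\mathcal{V}_1(\Lambda)^{1/n} = \lambda_1(\Lambda)/\gamma(n)$. The bound \eqref{eq:bound} therefore reduces to showing that
$$\omega_K\cdot\bigl(1-\tfrac{\log\log\omega_K}{n}\bigr)^n \;\leq\; \mathcal{V}_1(\Lambda) \;\leq\; \omega_K\cdot\bigl(1+\tfrac{\log\log\omega_K}{n}\bigr)^n$$
holds with probability $1-o(1)$.

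I would then apply Proposition \ref{prop:probabounds} with the specific choice $\varepsilon(n) := \bigl(1+\tfrac{\log\log\omega_K}{n}\bigr)^{-n}$. By construction, $\omega_K\cdot\varepsilon(n)^{-1}$ equals the upper target exactly; for the lower target, the elementary inequality $(1+y)^{-1}\geq 1-y$ applied with $y = \tfrac{\log\log\omega_K}{n}$ yields $\omega_K\cdot\varepsilon(n) \geq \omega_K\cdot(1-\tfrac{\log\log\omega_K}{n})^n$, which is what is required. Note also that $\varepsilon(n) = \Theta(1/\log\omega_K)$, so $\varepsilon(n)\to 0$ as $k\to\infty$.

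It remains to show that the failure probability $\varepsilon(n)\bigl(2+\omega_K\cdot\Err(K,t)\bigr)$ is $o(1)$. The term $2\varepsilon(n) = O(1/\log\omega_K)\to 0$ is immediate. For the cross-term, Theorem \ref{thm:cyclotomicnew} yields
$$\omega_K\cdot\Err(K,t) \;\leq\; \omega_K^2\cdot C\cdot Z(K,t,\kappa)\cdot e^{-\varepsilon\cdot d\cdot(t-10.99)}$$
for a suitable internal parameter $\kappa<t$. Since $t\geq 11$ is fixed and $d = \varphi(k)\to\infty$, the exponential factor decays super-polynomially in $\varphi(k)$, while $\omega_K\leq 2k$ grows only polynomially in $k$ and $Z(K,t,\kappa)$ can be crudely controlled using $\zeta_K(s)\leq\zeta(s)^{\varphi(k)}$ for $s>1$. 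Once $\kappa$ is chosen so that the decay rate strictly dominates the growth of the zeta factors, the entire expression tends to $0$, and the desired $1-o(1)$ probability follows.

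The main obstacle is the marginal regime $t=11$: the decay rate $e^{-\varepsilon d\cdot 0.01}$ is extremely weak, while $\zeta_K(t/\kappa)$ blows up as $\kappa$ is pushed close to $11$ from below. This precise tension is what forces the $t\geq 11$ hypothesis in the theorem (and the explicit $10.99$ in Theorem \ref{thm:cyclotomicnew}), and resolving it cleanly requires a careful trade-off in the choice of $\kappa$ using the explicit constants $\varepsilon$ and $C$ recorded there. Beyond this boundary regime the argument is essentially routine bookkeeping with those explicit constants.
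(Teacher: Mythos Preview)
Your proposal is correct and takes essentially the same route as the paper: feed the second-moment bound of Theorem~\ref{thm:cyclotomicnew} into Proposition~\ref{prop:probabounds} with $\varepsilon(n)\sim 1/\log\omega_K$. The paper's one-line proof simply sets $\varepsilon(n)=1/\ln\omega_K$, while you take the asymptotically equivalent choice $(1+\tfrac{\log\log\omega_K}{n})^{-n}$ so that the upper target is hit exactly; your discussion of the conversion $\mathcal{V}_1(\Lambda)^{1/n}=\lambda_1(\Lambda)/\gamma(n)$ and your explicit flagging of the tight $t=11$ boundary (where $\zeta_K(t/\kappa)$ competes against the weak decay $e^{-\varepsilon d\cdot 0.01}$) go beyond what the paper spells out.
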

\begin{proof}
    The result follows from Theorem \ref{thm:cyclotomicnew} and Proposition \ref{prop:probabounds} with $\varepsilon(n)=\tfrac{1}{\ln(\omega_K)}$.
\end{proof}
We end by recording the following remark, which follows from examining the proof of Proposition \ref{prop:probabounds}: 
\begin{remark}\label{rem:unconditionallower}
    The lower bounds on the shortest vector in Proposition \ref{prop:probabounds} and Theorem \ref{th:as_module_bound} are valid unconditionally, or rather, as soon as $t\geq 2$.
\end{remark}
 Indeed only the exact result $\mathbb{E}[\rho_V(\Lambda)]=V$ result is used for that bound. 
\section{Remarks on fixed number fields}\label{sec:example}
Should the reader be interested in a fixed cyclotomic or other number field $K$ and varying or fixed rank, our results can be significantly sharpened by explicitly computing the contributions of low height elements in $K$. The following example can be reproduced for a variety of number fields with sufficiently small degree to make the computations feasible: 
\begin{example}\label{ex:fixedK}
    Consider the cyclotomic field $K=\Q(\zeta_{16})$ of degree $8$ and conductor $16$. We take the rank to be $t=32$ leading to random lattices in dimension $256$. 
    In a matter of seconds on a personal computer, using Lemma \ref{lemma:genvolumeratio} and a SageMath \cite{sagemath} implementation of algorithms for finding points of bounded height as layed out in \cite{ComputingKrummDoyle}, we find that the contribution to the error term of the points 
    $$\{\alpha \in \Q(\zeta_{16})^\times: h(\alpha)\leq 0.6\}$$
    is bounded by $1.195\cdot 10^{-11}$. The contribution of the remaining terms can be estimated by the tail estimate in Proposition \ref{prop:tailbounds} with $h_0=0.6$, $c_S(K)=0.271763$ and even $\card S_K=16$ since one can check none of the exceptional integers listed in Theorem \ref{thm:heightexceptions} occurs in $K$. The Dedekind zeta values can also be computed with reasonable precision in SageMath and are bounded by $\zeta_K(8)^2<1.01$ and we find that the tail contribution is of the order of $10^{-14}$. We obtain therefore that the space of random lattices of rank $32$ over $\mathbb{Z}(\zeta_{16})$ satisfies for any ball in dimension $256$ of volume $V$:
    $$\mathbb{E}[\# ( \Lambda \cap B \setminus \{0\} )^2]\leq V^2+\omega_K\cdot V\cdot (1+\eta)$$
    with $\eta\leq 1.2\cdot 10^{-11}$. \par
    Therefore, we obtain by Proposition \ref{prop:probabounds} shortest vector bounds essentially unaffected by the error term $\eta$. For instance if we make the choice of  $\varepsilon=\ln(256)^{-1}$, with probability greater than $1-\varepsilon(2+16 \eta)\geq 0.639$, the shortest vector of such random module lattices must satisfy 
    \begin{equation}
  1- \frac{\log \log 256}{256} \leq 16^{-\frac{1}{256}}\cdot\frac{\lambda_1(\Lambda)}{\gamma(256)} \leq  1 + \frac{\log \log 256}{256},
\end{equation}
where $\gamma(256)$ is the radius of a ball of unit volume in dimension $256$. In other words, the shortest vector is likely close to $0.8156\%$ larger than $2^{-1/256}\cdot \gamma(256)$, the latter being what one expects for an unstructured lattice of the same dimension.  
\end{example}
The above example might suggest to the reader that, for all intents and purposes, we may pretend the second moment $\mathbb{E}[\rho_V(\Lambda)]$ is on the nose that of a Poisson distribution of mean $V/\omega_K$ and that the error term is negligible. While this is certainly true in this example and as the blocksize increases, there is some subtle dependence on the underlying number field to take into account, especially in relatively low dimensions. For cyclotomic fields, we plot the natural logarithm of the upper bound on the normalized error 
$$\eta=\frac{1}{V\cdot\omega_K}\cdot \left(\mathbb{E}[\rho_V(\Lambda)]-V^2-V\cdot\omega_K\right)$$ 
obtained by our methods; we again computed error bounds by computationally listing $\{\alpha \in \Q(\zeta_{16})^\times: H_W(\alpha)\leq 100\}$ and using a tail estimate for the remaining points. 
\begin{figure}[ht]
\caption{Log second moment errors for cyclotomic fields $\mathbb{Q}(\zeta_m)$ for $m=8,10,12,13,15,16$ and varying ranks $15\leq t\leq 32$. }
\includegraphics[width=10cm]{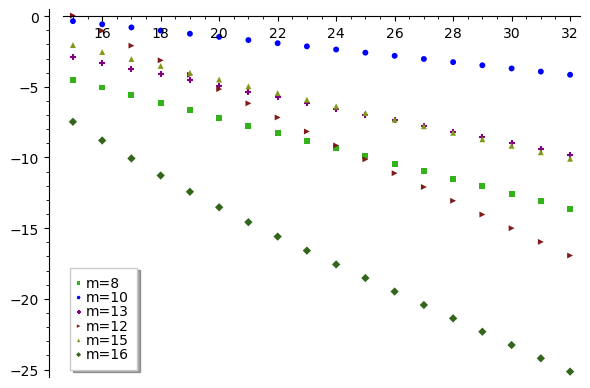}
\centering
\label{fig:momenterrors}
\end{figure}
These computations in figure \ref{fig:momenterrors} immediately generate results similar to Example \ref{ex:fixedK} and show that a conductor divisible by small odd primes leads to slightly more noise.
On the theoretical side, the sensitivity to the number field may in part be explained by the presence of units $\alpha\in\OK^\times$ of relatively small height (see the exceptions in the proof of Proposition \ref{prop:cycloheightbounds}): when the $\OK$-rank $t$ is small, the Euclidean lengths of $v,\alpha v\in\Lambda$ for any $\Lambda \in \ML_t(K)$ are now close and correlated. In future work, we hope to gain a better understanding of such questions.

\bibliographystyle{unsrt}
\bibliography{authfile}

\end{document}